\documentclass[11pt]{amsart} \textwidth=14.5cm \oddsidemargin=1cm
\evensidemargin=1cm

\usepackage{amsmath,wasysym}
\usepackage{amsxtra}
\usepackage{amscd}
\usepackage{amsthm}
\usepackage{amsfonts}
\usepackage{amssymb}
\usepackage{eucal}
\usepackage{graphics, color}
\usepackage{hyperref,mathrsfs}
\usepackage[usenames,dvipsnames]{xcolor}
\usepackage{tikz}
\usepackage{tikz-cd}
\usepackage{stmaryrd}
\usepackage[all]{xy}
\usepackage{hyperref}
\usepackage{color}
\usepackage{bbm} 
\usepackage{upgreek}
\usepackage{float}
\usepackage{dynkin-diagrams}
\usepackage{rotating}
\allowdisplaybreaks
\usepackage{cite}
\hypersetup{colorlinks,linkcolor=blue, urlcolor=blue,
	citecolor=blue}

\usepackage[all]{xy}

\newcommand{\BF}{{\mathbb {F}}}

\newcommand{\BN}{{\mathbb {N}}}
\newcommand{\BO}{{\mathbb {O}}}

\newcommand{\BZ}{{\mathbb {Z}}}

\newcommand{\calD}{{\mathfrak {D}}}

\newcommand{\calO}{{\mathfrak {O}}}

\newcommand{\caln}{{\mathfrak {n}}}

\newcommand{\RU}{{\mathrm {U}}}

\newcommand{\ScP}{{\mathscr {P}}}

\newcommand{\Ind}{{\textup{Ind}}}

\newcommand{\Lie}{{\mathrm{Lie}}}
\newcommand{\Mat}{{\mathrm{Mat}}}

\newcommand{\rank}{{\mathrm{rank}}}

\newcommand{\Supp}{{\mathrm{Supp}}}

\newcommand{\RMat}{{\mathrm{Mat}}}
\newcommand{\tr}{{\mathrm{tr}}}

\newtheorem*{thm*}{Theorem}
\newtheorem{thm}{Theorem}[section]

\newtheorem{lem}[thm]{Lemma}

\newtheorem{conj}[thm]{Conjecture}

\newtheorem{ques/conj}[thm]{Question/Conjecture}

\newtheorem{defn}[thm]{Definition}
\newtheorem{rmk}[thm]{Remark}

\newcommand{\A}{\mathbb{A}}





\input xy
\xyoption{all}

\usepackage{hyperref}


\numberwithin{equation}{section}





\def\calD{\mathcal{D}}

\def\calO{\mathcal{O}}



\newcommand\frP{\mathfrak{P}}









\renewcommand\a\alpha
\renewcommand\b\beta
\newcommand\g\gamma
\renewcommand\d\delta






\title[polynomial properties of unipotent radicals]{Polynomial properties of unipotent radicals of parabolic subgroups in classical groups}

\author{Qingchun Hao}
\author{Yang Yang}

\address{School of Mathematical Sciences, East China Normal University, Shanghai, 200241, P. R. China}
\email{52275500011@stu.ecnu.edu.cn (Yang)}

\date{}

\begin{document}
  
\maketitle

	 \begin{abstract}
            Let $R_u^{X,d}$ denote the unipotent radical of a (proper) maximal standard parabolic subgroup of the classical group $\mathrm{Sp}_{2n}(q)$, $\mathrm{SO}_{2n}(q)$, or $\mathrm U_{2n}({q^2})$. This paper establishes explicit formulas for the number of irreducible characters of $R_u^{X,d}$ with degree $q^e$   .
            
            \noindent\textbf{Keywords:} Maximal standard parabolic subgroup, Unipotent radical, Orbit theory, Polynomial properties, Schubert variety.
	 \end{abstract}   

	  \maketitle
	
	 
	 \section{Introduction}
	Let $\BF_q$ be the finite field of $q$ elements, where $q$ is a power of an odd prime. Denote by $U_n(q)$ the unitriangular group of degree $n$ over $\BF_q$. Let $\calD$ be a subset of $\Delta_n:= \{(i,j) \mid 1\le i < j \le n\}$ which is closed in the sense that if $(i,j),(j,k) \in \calD $ then $(i,k) \in \calD$. A pattern group $G_{\calD}(q)$ is a subgroup of $U_n(q)$ consisting of the matrices whose $(i,j)$-th entry is nonzero only if $(i,j) \in \calD$. The classification of all irreducible representations of all pattern groups is
 a wild problem. Even counting the number of irreducible characters of such groups is not easy. By a result due to Isaacs \cite{Isa95},  it is known that the dimension of a complex irreducible representation of a pattern
group is always a power of $q$. In \cite{Leh74}, Lehrer gave a conjecture as follows:
	 \begin{conj}[\bf Lehrer's conjecture]
	 	The number of irreducible characters of $U_n(q)$ with degree $q^{e}$ $(e\in \BN)$ is a polynomial in $q$ with integral coefficients.
	 \end{conj}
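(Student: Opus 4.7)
The plan is to attack Lehrer's conjecture via the orbit method, reducing character counts to orbit counts on an affine variety and then exhibiting the latter as polynomials in $q$. First, invoking Isaacs' algebra-group theory applied to $U_n(q)=1+\mathfrak{u}_n(\BF_q)$ (where $\mathfrak{u}_n$ denotes strictly upper-triangular matrices), one obtains a canonical bijection $\Irr(U_n(q))\leftrightarrow U_n(q)\backslash\mathfrak{u}_n(\BF_q)^*$ under which a character of degree $q^e$ corresponds to a coadjoint orbit of cardinality $q^{2e}$. It therefore suffices to show that the number of coadjoint orbits of size $q^{2e}$ is a polynomial in $q$ with integer coefficients.

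Second, I would stratify $\mathfrak{u}_n(\BF_q)^*$, identified with strictly lower-triangular matrices via the trace form, by a combinatorial invariant of the orbits. Following the supercharacter theory of Andr\'e--Yan, each orbit admits a canonical representative whose ``main pivot'' positions form a subset $\Pi\subset\Delta_n$. For each admissible $\Pi$ the set of orbit representatives of type $\Pi$ should be cut out inside an affine space by explicit equations coming from the vanishing of appropriate minors and from normalization conditions, and the orbit size should be constant along the stratum, equal to some fixed power $q^{2e(\Pi)}$ depending only on $\Pi$.

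Third, for each $\Pi$ I would show that the number of $\BF_q$-points of the corresponding stratum is polynomial in $q$ with integer coefficients, ideally by producing an explicit cell decomposition into affine spaces; here the Schubert-variety language mentioned in the paper's abstract is the natural tool, since the strata arise as pieces of a relative-position decomposition for flags in $\BF_q^n$. Summing $|V_\Pi(\BF_q)|$ over all $\Pi$ with $e(\Pi)=e$ then yields the desired polynomial $N_e(q)$.

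The main obstacle is the stratification step. Coarse invariants such as rank or Jordan type are not fine enough, because the classification of coadjoint orbits of $U_n(q)$ is ``wild'' in the sense of representation type: within a fixed coarse stratum the number of orbits need not be polynomial. One must therefore find a sufficiently fine combinatorial stratification whose every stratum has polynomial point count, and establish polynomiality simultaneously with the stratification. For unipotent radicals of maximal parabolics in classical groups (the setting of the present paper) the relevant strata are indexed by tractable combinatorial data and can be realized inside Schubert cells, which is precisely what makes the polynomiality statement accessible there; the essential difficulty in the full $U_n(q)$ case is that no such uniformly polynomial stratification is currently known, and this is why Lehrer's conjecture remains open in general.
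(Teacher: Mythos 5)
The statement you were asked to prove is labelled a \textbf{Conjecture} in the paper, and the paper contains no proof of it: Lehrer's conjecture for the full unitriangular group $U_n(q)$ remains open, and the authors only establish polynomiality for the much more special groups $R_u^{X,d}$, which (unlike $U_n(q)$ for $n\ge 4$) decompose as a semidirect product $A^X\rtimes H^X$ of two \emph{abelian} groups, so that Clifford theory reduces everything to counting coadjoint orbits of an abelian group on an abelian dual. Your proposal is therefore being measured against nothing, and on its own terms it is not a proof but a strategy outline: you yourself concede in the final paragraph that the crucial stratification of $\mathfrak{u}_n(\BF_q)^*$ into pieces with constant orbit size and polynomial point count ``is not currently known.'' That concession is exactly where the argument fails, so the proposal has a genuine, and fatal, gap.

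Two further points of caution. First, even your opening step is not free: the refined orbit-method bijection sending characters of degree $q^e$ to coadjoint orbits of cardinality $q^{2e}$ is delicate for $U_n(q)$ when $n$ is large relative to the characteristic (the exponential map is unavailable, Kirillov's character formula is known to fail for large $n$, and what survives are weaker counting statements); Isaacs's theorem only gives that degrees are powers of $q$. The paper sidesteps this entirely by working with an abelian normal subgroup $A^X$, identifying $\widehat{A^X}$ with $(\caln_A^X)^t$ via the perfect pairings of Lemmas \ref{tr}--\ref{tru}, and invoking Theorem \ref{Clifford}; none of this is available for $U_n(q)$ itself. Second, the Schubert-variety/Deodhar $R$-polynomial machinery in Section \ref{GC} is used only to show that the counts of symmetric, skew-symmetric, and skew-Hermitian matrices of fixed rank are polynomials in $q-1$ with non-negative coefficients; these are single, explicitly parametrized determinantal strata, and there is no indication that the wild family of coadjoint orbits of $U_n(q)$ admits any comparable realization. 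In short: the statement is open, the paper does not claim otherwise, and your proposal correctly identifies but does not close the gap.
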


 Let $N_{n,e}(q)$  denote the number of irreducible characters of $U_n(q)$
with degree $q^e$. Isaacs gave a strengthened form of Lehrer's conjecture in \cite{Isa07}. 
 \begin{conj}[\bf Isaacs's conjecture]\label{IC}
The functions $N_{n,e}(q)$ are polynomials in $q-1$ with non-negative integral coefficients.
	 \end{conj}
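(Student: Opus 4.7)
The plan is to combine Kirillov's orbit method with an explicit cell decomposition of the coadjoint orbit space. When the characteristic $p$ of $\BF_q$ satisfies $p > n$, Kirillov's method gives a bijection between $\Irr(U_n(q))$ and the coadjoint orbits of $U_n(q)$ on $\mathfrak{u}_n^*(\BF_q)$, under which an irreducible character of degree $q^e$ corresponds to an orbit of size $q^{2e}$. Thus $N_{n,e}(q)$ equals the number of coadjoint orbits of cardinality $q^{2e}$, and the task reduces to an equivariant point count. The case of small $p$ would be handled separately by Halasi--P\'alfy-type arguments that match $N_{n,e}(q)$ with a characteristic-free polynomial expression.

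For the counting, I would parameterize coadjoint orbits by their basic support $D \subset \Delta_n$ in the sense of Andr\'e--Yan: each orbit has a distinguished representative $f \in \mathfrak{u}_n^*$ whose support in the standard dual basis is a \emph{basic subset}, meaning no two entries share a row or a column, and each such support is realized by $(q-1)^{|D|}$ scalar choices. This yields a preliminary formula
\[
N_{n,e}(q) \;=\; \sum_{D} m_{D,e}(q)\,(q-1)^{|D|},
\]
where $m_{D,e}(q)$ counts the orbits supported on $D$ whose dimension matches $2e$, modulo the residual diagonal action. The orbit dimension is controlled by a quadratic form on $D$ encoding crossings between entries, so the sum can be refined by the orbit-dimension stratum. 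The next step is to express $m_{D,e}(q)$ itself as a sum of $(q-1)^{k}$-contributions through a cell decomposition of the affine variety of supported functionals modulo the relevant unipotent action, in the spirit of the Schubert-variety analysis used in this paper for $R_u^{X,d}$.

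The main obstacle, and the reason Isaacs's conjecture remains open in general, is non-negativity. Polynomiality in $q$ (and hence potentially in $q-1$) can be established by geometric arguments, since the coadjoint quotient is realized as the set of $\BF_q$-points of a scheme stratified by iterated affine fibrations. However, writing the resulting polynomial as a \emph{non-negative} $\BZ$-combination of powers of $(q-1)$ requires a cellwise decomposition in which every cell is isomorphic to an affine space over $\BF_q^{\times}$ of pure dimension. Supercharacter theory \`a la Andr\'e and Diaconis--Isaacs supplies such a decomposition for supercharacters, but the passage back to irreducible characters introduces terms of mixed sign. My strategy would be to refine the supercharacter partition by tracking, for each basic subset $D$, the torus-orbit structure on the off-$D$ coordinates through a sequence of Deodhar-type cell decompositions, with the aim of exhibiting every irreducible character uniformly as a point of an affine cell defined over $\BF_q^{\times}$. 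Producing such a sign-free decomposition in full generality is the genuinely hard input; pending that, I would first secure polynomiality, then reduce non-negativity to an explicit combinatorial identity that can be verified against Evseev's computations for $n \leq 13$ as a consistency check.
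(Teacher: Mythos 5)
The statement you are trying to prove is stated in the paper as a \emph{conjecture} (Isaacs's strengthening of Lehrer's conjecture for the full unitriangular group $U_n(q)$), and the paper offers no proof of it: it is quoted only as motivation. What the paper actually proves is the analogous assertion for a different family of groups, namely the unipotent radicals $R_u^{X,d}$ of maximal standard parabolic subgroups of $\mathrm{Sp}_{2n}(q)$, $\mathrm{SO}_{2n}(q)$, and $\mathrm{U}_{2n}(q^2)$, where the semi-direct product structure $A^X\rtimes H^X$ with \emph{abelian} factors reduces the count to ranks of symmetric, skew-symmetric, or skew-Hermitian matrices, and these rank strata are matched with opposite cells in Schubert varieties so that Deodhar's result on $R$-polynomials yields non-negativity in $q-1$. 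None of that machinery applies to $U_n(q)$ itself, which has no such two-step abelian decomposition.

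Your proposal is not a proof, and you acknowledge as much: the two essential steps are both left open. First, the reduction via Kirillov's orbit method requires $p$ large relative to $n$, and your appeal to ``Halasi--P\'alfy-type arguments'' for small $p$ is a placeholder, not an argument; moreover even the polynomiality of $N_{n,e}(q)$ in $q$ (Lehrer's conjecture) is not known in general, so the starting point ``secure polynomiality first'' is itself unresolved. Second, the Andr\'e--Yan basic subsets parameterize \emph{supercharacters}, not irreducible characters; the decomposition of a supercharacter into irreducibles is precisely the wild part of the problem, so the formula $N_{n,e}(q)=\sum_D m_{D,e}(q)(q-1)^{|D|}$ with $m_{D,e}$ counting orbits ``supported on $D$'' conceals the entire difficulty inside $m_{D,e}(q)$, for which you give no construction. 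The ``sign-free Deodhar-type cell decomposition'' you hope for is exactly the missing ingredient that would constitute a proof; describing it as the goal does not supply it. In short, the statement remains a conjecture both in the paper and after your proposal.
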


 The Sylow $p$-subgroups of the general linear groups are the groups of unitriangular matrices. There is an extensive literature on these groups and, specifically, on their characters. Isaacs showed that the degrees of the irreducible characters are always powers of $q$, the size of the underlying field \cite[Corollary B]{Isa95}. In paper \cite{Isa07} this author has investigated the irreducible characters of maximal and second maximal degree and computed the exact number of such characters. In \cite{Sa09}, Sangroniz delineated the Sylow $p$-subgroups of classical groups and their corresponding Lie algebras, further exploring the irreducible characters of the highest degree Sylow $p$-subgroup within these classical groups. Inspired by his work, we study the irreducible characters of unipotent radicals in maximal standard parabolic subgroups of classical groups. We refer to a group as admitting the polynomial property if an analogue of Lehrer’s conjecture holds for it. That is, the number of irreducible characters of a group with degree $q^{e}$ is a polynomial in $q$ with integral coefficients. In \cite{Ni22r}, Nien proved the polynomial property for unipotent radicals of standard parabolic subgroups with 3 parts.
The motivation of the paper is an extension of the works of Lehrer-Isaacs-Nien.

Let $G$ be the symplectic group $\mathrm {Sp}_{2n}(q)$ (type $\mathrm C$) or even special orthogonal group $\mathrm {SO}_{2n}(q)$ (type $\mathrm D$) defined over $\BF_q$, or even unitray group $\mathrm U_{2n}(q^2)$ (type $\mathrm U$) defined over $\BF_{q^2}$ (the notation for the even unitary group is typically $^{2}A_{2n}$, but for the sake of convenience in writing, we shall denote it by $\mathrm U$ in this context). Denote by ${R_u^{X,d}}$ the unipotent radical of a maximal standard parabolic subgroup in the group of type $X$, where $X=\mathrm C,\mathrm D,\mathrm U$. 
Each ${R_u^{X,d}}$ has a canonical semi-direct product decomposition into two of its abelian subgroups, i.e. ${R_u^{X,d}} \simeq A^{X}\rtimes H^{X}$ (see Table \ref{2} for the abelian subgroups $H^X$ and $A^X$). We introduce a map $\pi_c$ from $\Lie( A^{X})^t$, the dual space of $\Lie(A^{X})$, to a subset of $\RMat_h(k)$ consisting of matrices of size $h$ over $k$. We give a numerical formula for the number of irreducible characters with degree $q^e$ for $e\in \BN$. In essence, this formula is a $q$-power product with $|\Tilde{\mathfrak{P}}_X({R_u}, e)|$, where $|\Tilde{\mathfrak{P}}_X({R_u}, e)|$ is the preimage of $\pi_c$ over a subset of $\RMat_h(k)$; see \ref{eqru}. The problem of the polynomiality of ${R_u^{X,d}}$ then becomes the question of whether $|\Tilde{\mathfrak{P}}_X({R_u}, e)|$ is a polynomial in $q$ with integral coefficients for $e\in \BN$. Using the methods of Clifford theory and finite field version of orbit theory, we know that $|\Tilde{\mathfrak{P}}_X({R_u}, e)|$ is a $q$-power product with the cardinality of a specific class of matrices (such as symmetric matrices for type $\mathrm C$). On the other hand, we can also use the geometric method to prove the polynomial property and get a stronger result. That is, the number of irreducible characters of ${R_u^{X,d}}$  with degree $q^e$ are polynomials in $q-1$ with non-negative integral coefficients.

This paper is organized as follows. In Section \ref{section 2}, we give the necessary preliminaries for the unipotent radicals of the maximal standard parabolic subgroups. In Section \ref{4}, we review Clifford theory and finite version orbit theory, establishing character number formulas of a given degree for all ${R_u^{X,d}}$. In Section \ref{geo poly}, we explicitly give all irreducible character number formulas for these unipotent radicals. In Section \ref{GC}, using geometric methods, we obtain a stronger result with respect to the number of irreducible characters of ${R_u^{X,d}}$  with degree $q^e$.

Throughout this paper, we use $\BN$ to represent the set of non-negative integers and $\BZ_{+}$ to denote the set of positive integers. For any finite set $S$, we express its cardinality as $|S|$. The Lie algebra of a closed linear algebraic group $A$ is represented by $\Lie(A)$. We indicate the transposition of a matrix $A$ as $A^t$ and the inverse of $A^t$ as $A^{-t}$.

	 
	 \section{Preliminaries}\label{section 2}
Let $k$ be a finite field, either $\BF_q$ or $\BF_{q^2}$, where $q$ is a power of an odd prime.
Consider the symplectic group $\mathrm{Sp}_{2n}(q)$ and the even special orthogonal group $\mathrm{SO}_{2n}(q)$ over $\BF_{q}$. Also, consider the even unitary group $\mathrm{U}_{2n}(q^2)$ over $\BF_{q^2}$. These groups are referred to as types $\mathrm C$, $\mathrm {D}$ and $\mathrm U$, respectively. \subsection{Root Systems of $\mathrm{Sp}_{2n}(q)$ and  $\mathrm{SO}_{2n}(q)$}
The Dynkin diagrams of $\mathrm{Sp}_{2n}(q)$ and  $\mathrm{SO}_{2n}(q)$ are depicted as follows:


\begin{itemize}
    \item $C_n$ ($n\ge 3$):
    \dynkin[Kac,labels={\alpha_1,\alpha_2,,\alpha_{n-1},\alpha_n},
label directions={,,left,,,,right,,},
scale=2,
] C{},
\item $D_n$ ($n\ge 4$):
\dynkin[Kac,labels={\alpha_1,\alpha_2,,\alpha_{n-2},\alpha_{n-1},\alpha_n},
label directions={,,left,,,,right,,},
scale=2,
] D{},
\end{itemize}
where $\alpha_i$ are simple roots. 
     
  \subsection{Unipotent radicals of maximal standard parabolic subgroups}
  \subsubsection{The $\mathrm{Sp}_{2n}(q)$ and $\mathrm{SO}_{2n}(q)$ cases}\label{section 2.2.1}
     Let $G$ be $\mathrm{Sp}_{2n}(q)$ or $\mathrm{SO}_{2n}(q)$. Denote $T\subset G$ as the set of all diagonal elements and $B\subset G$ as the standard Borel subgroup containing $T$. We define $W$ as the Weyl group of $G$ and $S$ as the set of simple roots relative to the pair $(B,T)$. For a particular subset $S_Q\subset S$, let $Q$ be the standard parabolic subgroup associated with $S_Q$. We denote by $P^{\mathrm C,d}$ the maximal standard parabolic subgroup of $\mathrm{Sp}_{2n}(q)$ with $S\backslash\{\alpha_d\}$ as its associated set of simple roots. Furthermore, we use ${R_u^{\mathrm C,d}}$ to represent its unipotent radical and $\caln^{\mathrm C,d}$ for the nilpotent radical of the corresponding parabolic subalgebra. Similar notation ${R_u^{\mathrm D,d}}$ and $\caln^{\mathrm D,d}$ are used for $\mathrm{SO}_{2n}(q)$.
We have :
$$\caln^{X,d}:=\left\{\begin{pmatrix}
 u&v\\
 0&w \end{pmatrix}\right\}.$$
It is the semi-direct product of $\caln_{H}^{X}$ and $\caln_{A}^{X}$ ($X=\mathrm C,\mathrm D$).
The algebraic groups corresponding to $\caln_{H}^{X}$ and $\caln_{A}^{X}$ are denoted by $H^{X}$ and $A^{X}$, respectively. These will be detailed in Tables \ref{1} and \ref{2}. Consequently, ${R_u^{X,d}}$ is the semi-direct product of $H^{X}$ and $A^{X}$:
$$ {R_u^{X,d}} \simeq A^{X}\rtimes H^{X}.$$

 \subsubsection{The $\mathrm U_{2n}(q^2)$ case}\label{section 2.2.2}
 Let $G={GL}(2n,q^2)$ with Steinberg endomorphism $F=F_{q}\circ\sigma$, where $F_{q}: G\to G,(a_{ij})\mapsto(\overline{a_{ij}}):=(a^{q}_{ij})$ is the standard Frobenius map and $\sigma: G\to G,(a_{ij})\mapsto J_{2n}((a_{ij})^{-t})J_{2n}$ with
 $$J_{2n}=\begin{pmatrix}
0 & \cdots                                 & 1   \\
	\vdots &\begin{sideways}$\ddots$\end{sideways} & \vdots      \\
	1 & \cdots                                 & 0   \\
	 \end{pmatrix}.$$
The fixed points subgroup $G^F\cong\RU_{2n}(q^2)$. Denote by $W$ the Weyl group of $G$ generated by the set of simple reflections $S=\{s_i \mid 1\le i\le 2n-1\}$. Let the group $W^F$ be the Weyl group of $G^F$. This group is generated by the set $S_F$ of involutions which are in natural bijection with the $F$-orbits on $S$ (see \cite[Lemma 23.3]{Mal11}). Denote by $P_I$ the parabolic subgroup of $G$ corresponding to $I\subseteq S$ and represent $L_I$ as its standard Levi complement.  According to the double coset decomposition in \cite[Proposition 12.2]{Mal11}, $P_I$ is $F$-stable if and only if $I$ is $F$-invariant. In other words, $I$ must be a union of $F$-orbits in $S$ and correspond to a subset $I_F \subseteq S_F$. We denote $P_{I_F}$ by the parabolic subgroups of $G^F$ defined by $I_F$. In this context, we have $F(s_i)=s_{2n-i}$ ($1\le i\le 2n-1$) and $S_F=\{\beta_i=s_i s_{2n-i} , \beta_{n}=s_n \mid 1\le i\le n-1\}$, where $s_j\in S$. Below is the Satake diagram for $\RU_{2n}$:
$$\dynkin[labels*={\alpha_1,\alpha_2,,\alpha_n,,\alpha_{2n-2},\alpha_{2n-1}},
label directions={,,left,,,,right,,},
scale=3,
] A{IIIb}.$$

\begin{lem}\cite[Proposition 26.1]{Mal11}
    Let G be a connected reductive group with a Steinberg endomorphism $F$  and let $I\subseteq S$ be an $F$-invariant subset.
\begin{itemize}
    \item There is a unique $G^F$-conjugacy class of $F$-stable $G$-conjugates of $P_I$, and we have $P_I^F=P_{I_F}$.
    \item There is a unique $P_I^F$-conjugacy class of the $F$-stable Levi complements ${L_I}$ to $U_I=R_u(P_I)$ in $P_I$, and we have $P_I^{F}\simeq L_I^{F}\ltimes U_I^{F}$.
\end{itemize}
\end{lem}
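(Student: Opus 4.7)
The plan is to apply the Lang--Steinberg theorem twice, exploiting the connectedness and $F$-stability of $P_I$ and $U_I$ separately. For the first bullet, I would start from the observation that every $G$-conjugate of $P_I$ has the form $gP_Ig^{-1}$ for some $g\in G$, and that $F$-stability of this conjugate is equivalent to $g^{-1}F(g)\in N_G(P_I)=P_I$, using the self-normalizing property of parabolic subgroups. Since $I$ is $F$-invariant, $P_I$ is itself a connected $F$-stable algebraic group, so Lang--Steinberg produces $p\in P_I$ with $g^{-1}F(g)=p^{-1}F(p)$. Then $gp^{-1}\in G^F$ conjugates $P_I$ onto $gP_Ig^{-1}$, which yields the unique $G^F$-class. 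The identity $P_I^F=P_{I_F}$ then follows from the bijection between $F$-invariant subsets of $S$ and subsets of $S_F$: the group on the left contains $B^F$ and is generated over it by the images of Bruhat cells labelled by $F$-orbits in $I$, and these match the generators of $P_{I_F}$ in $G^F$ by construction of $S_F$.

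For the second bullet, I would invoke the classical fact that any Levi complement $L'$ of $U_I$ in $P_I$ can be written as $L'=uL_Iu^{-1}$ for some $u\in U_I$. If $L'$ is $F$-stable, then $F(u)L_IF(u)^{-1}=uL_Iu^{-1}$ forces $u^{-1}F(u)\in N_G(L_I)\cap U_I$. A short argument shows this intersection is trivial: any $u\in U_I$ normalizing $L_I$ normalizes the connected center $Z(L_I)^{\circ}$, but the continuous orbit map $U_I\to\operatorname{Aut}(Z(L_I)^{\circ})$ into a discrete group must be constant, so $u$ centralizes $Z(L_I)^{\circ}$, forcing $u\in L_I\cap U_I=\{1\}$. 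Hence $u\in U_I^F\subseteq P_I^F$ and $L'$ is $P_I^F$-conjugate to $L_I$. The semidirect product identity $P_I^F\simeq L_I^F\ltimes U_I^F$ is then obtained by taking $F$-fixed points of the Levi decomposition $P_I=L_I\ltimes U_I$; the only nontrivial step, surjectivity of the projection $P_I^F\twoheadrightarrow L_I^F$, is settled by a final application of Lang--Steinberg to the connected unipotent group $U_I$.

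The main obstacle, in my view, is not either of the two Lang--Steinberg arguments themselves but the bookkeeping needed to identify $P_I^F$ with $P_{I_F}$ combinatorially. One must match the $F$-orbits on $S$ with the involutions in $S_F$, check that this matching respects the subset structure (sending $I\subseteq S$ to $I_F\subseteq S_F$), and verify that the resulting subgroup of $W^F$ governs $P_I^F$ via its Bruhat decomposition. In the unitary case relevant to the sequel, where $F$ acts on $\{s_1,\dots,s_{2n-1}\}$ by $s_i\mapsto s_{2n-i}$, this amounts to recording that each $F$-orbit $\{s_i,s_{2n-i}\}$ with $i<n$ corresponds to $\beta_i=s_is_{2n-i}$ and the fixed element $s_n$ to $\beta_n=s_n$, matching the Satake diagram displayed above.
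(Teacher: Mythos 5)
This lemma is quoted directly from \cite[Proposition 26.1]{Mal11}; the paper supplies no proof of its own, so there is no internal argument to compare against. Your Lang--Steinberg proof is the standard one (and is essentially what the cited reference does): both bullets reduce to surjectivity of $x\mapsto x^{-1}F(x)$ on the connected $F$-stable groups $P_I$ and $U_I$, combined with self-normalization of parabolics, conjugacy of Levi complements under the unipotent radical, and the BN-pair bookkeeping identifying $P_I^F$ with $P_{I_F}$. One step needs repair, though: your justification that $N_{U_I}(L_I)$ is trivial via ``a continuous map into a discrete group is constant'' does not quite work as stated, since the orbit map into $\Aut(Z(L_I)^{\circ})$ is only defined on $N_{U_I}(Z(L_I)^{\circ})$, a closed subgroup of $U_I$ that is not known a priori to be connected (closed subgroups of unipotent groups in positive characteristic need not be). The standard fix is a commutator argument: if $u\in U_I$ normalizes $L_I$, then for every $z\in Z(L_I)^{\circ}$ the element $uzu^{-1}z^{-1}$ lies in $L_I$ (because $uzu^{-1}\in Z(L_I)^{\circ}\subseteq L_I$) and in $U_I$ (because $U_I$ is normal in $P_I$), hence is trivial; therefore $u\in C_G(Z(L_I)^{\circ})=L_I$ and so $u\in L_I\cap U_I=\{1\}$. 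With that replacement the argument is complete. As a minor simplification, surjectivity of $P_I^F\to L_I^F$ does not actually require a further application of Lang--Steinberg: since $L_I$ is an $F$-stable complement to $U_I$, the factorization $p=lu$ is unique and $F$-equivariant, so $p\in P_I^F$ forces $l\in L_I^F$ and $u\in U_I^F$ directly.
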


The above lemma allows us to identify the maximal parabolic subgroup of $\mathrm U_{2n}(q^2)$ corresponding to $S_F\backslash\{\beta_d\}$ ($1\le d\le n-1$). We denote this group by $P^{\mathrm U,d}$ and refer to its unipotent radical as $R_u^{\mathrm U,d}$. Let $P^{d}$ be the maximal $F$-stable parabolic subgroup of ${GL}(2n,q^2)$ corresponding to $S\backslash\{\alpha_d,\alpha_{2n-d}\}$ and $R_u^{d}$ be its unipotent radical. We have 
$$
P^{d}=\left\{
\begin{pmatrix}
 A_{d}^1&A^2&A^3\\
 0&A_{2n-2d}^4&A^5\\
 0&0&A_{d}^6\\
\end{pmatrix}\Bigg|\quad \begin{aligned}
A_{d}^1,A_{d}^6 \in{GL}(d,q^2), A_{2n-2d}^4\in{GL}(2n-2d,q^2),\\ A^2\in\RMat_{d\times(2n-2d)}(q^2),
A^3\in\RMat_{d\times d}(q^2),\\A^5\in\RMat_{(2n-2d)\times d}(q^2).
\end{aligned}\right\} ,
$$
$$
R_u^{d}=\left\{
\begin{pmatrix}
 I_{d}&B^1&B^2\\
 0&I_{2n-2d}&B^3\\
 0&0&I_{d}\\
\end{pmatrix}\Bigg|\quad\begin{aligned}
    B^1\in\RMat_{d\times(2n-2d)}(q^2),\\
B^2\in\RMat_{d\times d}(q^2),\\B^3\in\RMat_{(2n-2d)\times d}(q^2).
\end{aligned}
    \right\}.
$$
Furthermore, the group $R_u^{d}$ can be decomposed as a semi-direct product in the following way: $$R_u^{d}\simeq A\rtimes H.$$
\begin{align*}
H=\left\{
\begin{pmatrix}
 I_{d}&A_1&0&0\\
 0&I_{n-d}&0&0\\
 0&0&I_{n-d}&A_2\\
 0&0&0&I_{d}
\end{pmatrix}\right\},\qquad 
    A=\left\{
\begin{pmatrix}
 I_{d}&0&B_1&B_2\\
 0&I_{n-d}&0&B_3\\
 0&0&I_{n-d}&0\\
 0&0&0&I_{d}
\end{pmatrix}\right\}.
\end{align*}

Denote by $H^{\mathrm U}$ ($A^{\mathrm U}$, respectively) the fixed points subgroup $H^F$ ($A^F$, respectively). Then, we have 
$$R_u^{\mathrm U,d}\simeq A^{\mathrm U}\rtimes H^{\mathrm{U}}.$$
We will describe them explicitly in Table \ref{2}. It follows from \cite[Proposition 23.2]{Mal11} that the semi-direct product $A^{\mathrm U}\rtimes H^{\mathrm{U}}$ is also $F$-stable.
 \subsubsection{}
Let $H^{X}=\left\{
\begin{pmatrix} 
 L&M\\
 0&N\\
 \end{pmatrix}\right\}, 
 A^{X}=\left\{\begin{pmatrix} 
 U&V\\
 0&W\\
 \end{pmatrix}\right\}$. We have: 

\begin{table}[h]
\centering
\caption{}
\begin{tabular}{|c|c|c|c|c|c|}
\hline
$X$ & $u$ &$ v$ & $w$ & $\caln_{H}^{X}$ & $\caln_{A}^{X}$  \\ \hline
$\mathrm C$ & $\begin{pmatrix}
 0&A\\
 0&0 \end{pmatrix}$ & $\begin{pmatrix}
 B_1&B_2\\
 B_3&0 \end{pmatrix}$ & $\begin{pmatrix}
 0&0\\
 -A^t&0 \end{pmatrix}$ & $\begin{pmatrix}
 u&0\\
 0&w \end{pmatrix}$ & $\begin{pmatrix}
 0&v\\
 0&0 \end{pmatrix}$  \\ \hline
$\mathrm D$ & $\begin{pmatrix}
 0&A\\
 0&0 \end{pmatrix}$ & $\begin{pmatrix}
 B_1&B_2\\
 B_3&0 \end{pmatrix}$ & $\begin{pmatrix}
 0&0\\
 -A^t&0 \end{pmatrix}$ & $\begin{pmatrix}
 u&0\\
 0&w \end{pmatrix}$ & $\begin{pmatrix}
 0&v\\
 0&0 \end{pmatrix}$  \\ \hline
\end{tabular}
\label{1}
\end{table}

\begin{table}[h]
\centering
\caption{}
\begin{tabular}{|c|c|c|c|c|c|c|}
\hline
$X$ &$ L$ & $M$ & $N$ & $U$ & $V$  & $W$ \\ \hline
$\mathrm C$ & $\begin{pmatrix}
 I_d&A\\
 0&I_{n-d} \end{pmatrix}$ & $0_{n\times n}$ & $\begin{pmatrix}
 I_d&0\\
 -A^t&I_{n-d} \end{pmatrix}$ & $I_n$ & $\begin{pmatrix}
 B_1&B_2\\
 B_3&0 \end{pmatrix}$  & $I_n$ \\ \hline
$\mathrm D$ & $\begin{pmatrix}
 I_d&A\\
 0&I_{n-d} \end{pmatrix}$ & $0_{n\times n}$ & $\begin{pmatrix}
 I_d&0\\
 -A^t&I_{n-d} \end{pmatrix}$ & $I_n$ & $\begin{pmatrix}
 B_1&B_2\\
 B_3&0 \end{pmatrix}$  & $I_n$\\ \hline
$\mathrm U$ & $\begin{pmatrix}
 I_d&A_1\\
 0&I_{n-d} \end{pmatrix}$ & $0_{n\times n}$ & $\begin{pmatrix}
 I_{n-d}&A_2\\
 0&I_d \end{pmatrix}$ & $I_n$ & $\begin{pmatrix}
 B_1&B_2\\
 0&B_3 \end{pmatrix}$  & $I_n$ \\ \hline
\end{tabular}
\label{2}
\end{table}
\begin{itemize}
    \item The matrices $A\in\Mat_{d\times (n-d)}(\BF_q)$, $A_1\in\Mat_{d\times (n-d)}(\BF_{q^2})$.
    \item The matrices $v$ and $V$ are symmetric, $B_1\in\Mat_{d\times d}(\BF_q)$, $B_2\in\Mat_{d\times(n-d)}(\BF_q)$, $B_3\in\Mat_{(n-d)\times d}(\BF_q)$, if $X=\mathrm C$.
    \item The matrices $v$ and $V$ are skew-symmetric, $B_1\in\Mat_{d\times d}(\BF_q)$, $B_2\in\Mat_{d\times(n-d)}(\BF_q)$, $B_3\in\Mat_{(n-d)\times d}(\BF_q)$, if $X=\mathrm D$.
    \item The matrices $B_1=-J_d\overline{B_3^t}J_{n-d}$, $B_2=-J_d\overline{B_2^t}J_d$, and $A_1=-J_d\overline{A_2^t}J_{n-d}$. The matrices $B_1\in\Mat_{d\times (n-d)}(\BF_{q^2})$, $B_2\in\Mat_{d\times d}(\BF_{q^2})$, $B_3\in\Mat_{(n-d)\times d}(\BF_{q^2})$, if $X=\mathrm U$.
\end{itemize}

We give some lemma which will be used in the next section.
 
\begin{lem}\label{tr}
The trace map $$\tr:\Mat_{2n}(\BF_q)\times\Mat_{2n}(\BF_q)\to \BF_q,\quad (A,B)\mapsto \tr(A\cdot B)$$ restricted to $\mathfrak{n}^\mathrm C_{{A}}\times (\mathfrak{n}^\mathrm C_{A})^t$ becomes a perfect pairing.  \end{lem}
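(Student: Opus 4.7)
The plan is to unwind the block descriptions from Tables \ref{1} and \ref{2} and reduce the pairing on $\caln_A^\mathrm C \times (\caln_A^\mathrm C)^t$ to a much simpler bilinear form on pairs of symmetric matrices, then verify non-degeneracy by evaluating against well-chosen matrix units. I would proceed as follows.

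First, write a general element of $\caln_A^\mathrm C$ as $X = \begin{pmatrix} 0 & v \\ 0 & 0 \end{pmatrix}$, where $v = \begin{pmatrix} B_1 & B_2 \\ B_2^t & 0 \end{pmatrix}$ is an $n \times n$ symmetric matrix with $B_1 \in \Mat_d(\BF_q)$ symmetric and $B_2 \in \Mat_{d \times (n-d)}(\BF_q)$; the symmetry of $v$ forces $B_3 = B_2^t$ in the notation of Table \ref{1}. Transposing, a general element of $(\caln_A^\mathrm C)^t$ has the form $Y = \begin{pmatrix} 0 & 0 \\ v' & 0 \end{pmatrix}$ with $v'$ symmetric of the same block shape. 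A direct multiplication gives $X \cdot Y = \begin{pmatrix} v v' & 0 \\ 0 & 0 \end{pmatrix}$, so $\tr(X \cdot Y) = \tr(v v')$; that is, the pairing reduces to the Frobenius-type pairing on the subspace $\calS \subset \Mat_n(\BF_q)$ of symmetric matrices with vanishing lower-right $(n-d) \times (n-d)$ block.

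Second, note that both $\caln_A^\mathrm C$ and $(\caln_A^\mathrm C)^t$ have $\BF_q$-dimension $\tfrac{d(d+1)}{2} + d(n-d)$, matching $\dim \calS$. Hence it suffices to show the induced map $\calS \to \calS^{\vee}$, $v \mapsto \tr(v \,\cdot\,)$, is injective. Writing $v v'$ in block form, a direct computation yields
\begin{equation*}
\tr(v v') = \tr(B_1 B_1') + 2\sum_{i,j}(B_2)_{ij}(B_2')_{ij}.
\end{equation*}
Assume $\tr(v v') = 0$ for every $v' \in \calS$. Taking $B_1' = 0$ and $B_2' = E_{ij}$, the matrix unit, forces $(B_2)_{ij} = 0$ for all $i,j$, so $B_2 = 0$. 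Then taking $B_2' = 0$ and $B_1' = E_{ii}$ (respectively $B_1' = E_{ij}+E_{ji}$ for $i \neq j$) forces $(B_1)_{ii} = 0$ (respectively $2(B_1)_{ij} = 0$). Since $q$ is odd, this gives $B_1 = 0$, hence $v = 0$.

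The only mild subtlety is the symmetry constraint on $B_1$: we must restrict the test elements $B_1'$ to symmetric matrices, which is why the diagonal case and the off-diagonal case are handled separately, and this is also where oddness of $q$ enters (to invert the $2$). Beyond this bookkeeping, the argument is a routine non-degeneracy check, so I do not expect a serious obstacle. The same scheme of proof should generalize verbatim to the analogous statements for types $\mathrm D$ and $\mathrm U$ in later sections, with symmetric matrices replaced by skew-symmetric ones or by matrices satisfying the appropriate Hermitian condition.
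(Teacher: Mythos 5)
Your proof is correct and follows essentially the same route as the paper's: reduce $\tr(X\cdot Y)$ to the form $\tr(B_1A_1)+2\tr(B_2A_2)$ on the symmetric blocks and kill each block by pairing against matrix units $E_{i,i}$, $E_{i,j}+E_{j,i}$, and $E_{i,j}$, using that $q$ is odd to invert $2$. The only (welcome) addition is your explicit dimension count showing that injectivity of $v\mapsto\tr(v\,\cdot\,)$ on one side already gives a perfect pairing, a point the paper leaves implicit.
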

\begin{proof}
    Let $X=\begin{pmatrix}
 0&B\\
 0&0 \end{pmatrix}\in\caln_{A}^{\mathrm C}$ and $Y=\begin{pmatrix}
 0&0\\
 A&0 \end{pmatrix}\in (\mathfrak{n}^\mathrm C_{A})^t$, where $B=\begin{pmatrix}
 B_1&B_2\\
 B_3&0 \end{pmatrix}$ and $A=\begin{pmatrix}
 A_1&A_3\\
 A_2&0 \end{pmatrix}$ are symmetric. If $\tr(X\cdot Y)=0$ for all $Y\in (\mathfrak{n}^\mathrm C_{A})^t$, we get $\tr(B_1A_1+B_2A_2+B_3A_3)=0$ for all $A$. Let $B_k=(b_{i,j}^k)$ $(k=1,2,3)$. First, taking $A_2=0, A_1=E_{i,i}\ (E_{i,j}+E_{j,i} ,\text{respectively})$, we get $b_{i,i}^1=0\ (b_{i,j}^1=b_{j,i}^1=0, \text{respectively})$. Next, taking $A_1=0$ and $A_2=E_{i,j}$, we get $b_{i,j}^2=0$. Thus, $X=0$ and the map restricted to $\mathfrak{n}^\mathrm C_{{A}}\times (\mathfrak{n}^\mathrm C_{A})^t$ is non-degenerate.
\end{proof}

\begin{lem}\label{tr2}
The trace map $$\tr:\Mat_{2n}(\BF_q)\times\Mat_{2n}(\BF_q)\to \BF_q,\quad (A,B)\mapsto \tr(A\cdot B)$$ restricted to $\mathfrak{n}^\mathrm D_{{A}}\times (\mathfrak{n}^\mathrm D_{A})^t$ becomes a perfect pairing.  \end{lem}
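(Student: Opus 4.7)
The plan is to follow the symplectic argument of Lemma \ref{tr} essentially verbatim, substituting the skew-symmetric condition appropriate to type $\mathrm D$ for the symmetric one. A typical element of $\mathfrak{n}^{\mathrm D}_A$ has the form $X = \begin{pmatrix} 0 & B \\ 0 & 0 \end{pmatrix}$ with $B = \begin{pmatrix} B_1 & B_2 \\ B_3 & 0 \end{pmatrix}$ skew-symmetric, so $B_1^t=-B_1$ and $B_3=-B_2^t$; correspondingly a generic element of $(\mathfrak{n}^{\mathrm D}_A)^t$ is $Y = \begin{pmatrix} 0 & 0 \\ A & 0 \end{pmatrix}$ with $A = \begin{pmatrix} A_1 & A_3 \\ A_2 & 0 \end{pmatrix}$ satisfying the same constraints. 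A direct block computation gives
\[
\tr(X\cdot Y) \;=\; \tr(B_1 A_1) + \tr(B_2 A_2) + \tr(B_3 A_3).
\]

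Assuming $\tr(X\cdot Y)=0$ for every $Y$, I would first take $A_2=A_3=0$ and let $A_1$ range over skew-symmetric matrices. For $i<j$ the elementary choice $A_1=E_{i,j}-E_{j,i}$ yields $\tr(B_1 A_1)=-2b^1_{i,j}$, so odd characteristic forces $b^1_{i,j}=0$; combined with skew-symmetry (which already kills the diagonal) this gives $B_1=0$. Next I would set $A_1=0$ and $A_3=-A_2^t$ with $A_2$ arbitrary. The identity
\[
\tr(B_3 A_3) \;=\; \tr\bigl((-B_2^t)(-A_2^t)\bigr) \;=\; \tr\bigl((A_2 B_2)^t\bigr) \;=\; \tr(B_2 A_2)
\]
reduces the vanishing condition to $2\tr(B_2 A_2)=0$ for every $A_2$, and choosing $A_2=E_{i,j}$ successively shows that every entry of $B_2$ vanishes; hence $B_2=0$ and $B_3=-B_2^t=0$. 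Thus $X=0$, establishing non-degeneracy on the $\mathfrak{n}^{\mathrm D}_A$ factor; since transposition is a dimension-preserving isomorphism between the two factors, non-degeneracy on one side is enough to conclude that the pairing is perfect.

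The only real obstacle is the bookkeeping around the coupling $B_3=-B_2^t$ (and its counterpart on the $A$-side), which forces the probe matrices $A$ to respect the skew constraint and produces the factor of $2$ in both stages of the argument. This is precisely where the hypothesis of odd characteristic enters; beyond that, the proof is line-by-line analogous to the symplectic case, so no essentially new ideas are required.
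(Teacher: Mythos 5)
Your proposal is correct and is exactly the argument the paper intends: its proof of Lemma \ref{tr2} simply says it is ``similar to that of Lemma \ref{tr}'', and your write-up carries out that symplectic-case argument with the skew-symmetric constraints ($B_1^t=-B_1$, $B_3=-B_2^t$) substituted, using odd characteristic to cancel the factors of $2$. The block computations and the choices of probe matrices all check out, so nothing further is needed.
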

\begin{proof}
 The proof is similar to that of Lemma \ref{tr}.   
\end{proof}

Let $\mathfrak{n}^\mathrm U_{A}:=\Lie(A^{\mathrm U})$ and $\caln^{\mathrm U,d}:=\Lie(R_u^{\mathrm U,d})$.
\begin{lem}\label{tru}
 The map $$\widetilde{\tr}:\mathfrak{n}^\mathrm U_{A}\times(\mathfrak{n}^\mathrm U_{{A}})^t\rightarrow\BF_q,\quad (A,B)\mapsto\tr(A\cdot B)^{q}+\tr(A\cdot B)$$ is a perfect pairing.
\end{lem}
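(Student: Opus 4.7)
The plan is to reduce the non-degeneracy of $\widetilde{\tr}$ to a routine Galois descent from a perfect trace pairing over $\BF_{q^2}$. First I would observe that the target $\BF_q$ is automatic: for any $x\in \BF_{q^2}$, $(x^q+x)^q = x^{q^2}+x^q = x^q+x$, so the image of $\widetilde{\tr}$ is $\Gal(\BF_{q^2}/\BF_q)$-invariant. Next I would lift the setup: let $\caln^\mathrm U_{A,\BF_{q^2}}$ denote the unconstrained $\BF_{q^2}$-span of matrices $\begin{pmatrix} 0 & V \\ 0 & 0 \end{pmatrix}$ with $V = \begin{pmatrix} B_1 & B_2 \\ 0 & B_3 \end{pmatrix}$ and $B_1,B_2,B_3$ free over $\BF_{q^2}$, and similarly for its transpose. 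An argument parallel to the proof of Lemma \ref{tr}---computing $\tr(VY)$ for $Y$ in the transpose space and isolating each block entry of $V$ by test matrices with a single nonzero entry---shows that $\langle A,B\rangle := \tr(AB)$ is a perfect $\BF_{q^2}$-bilinear pairing on $\caln^\mathrm U_{A,\BF_{q^2}}\times(\caln^\mathrm U_{A,\BF_{q^2}})^t$.

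I would then set up the Steinberg endomorphism at the Lie-algebra level. Differentiating $\sigma(g)=J_{2n}g^{-t}J_{2n}$ and composing with the entrywise $q$-th power yields the semilinear involution $F(X)=-J_{2n}\overline{X^t}J_{2n}$ on $\caln^\mathrm U_{A,\BF_{q^2}}$. A short computation shows that $F$ commutes with transposition, so it acts on the transpose space as well, and that the $F$-fixed points recover exactly $\mathfrak{n}^\mathrm U_A$ and $(\mathfrak{n}^\mathrm U_A)^t$ (these are the $\BF_q$-forms cut out by the constraints in Table \ref{2} for type $\mathrm U$). Using $J_{2n}^2=I$ and cyclicity of the trace, one verifies the equivariance
$$\tr(F(A)F(B)) = \tr\bigl(J_{2n}\overline{A^t}\,\overline{B^t}J_{2n}\bigr) = \tr\bigl(\overline{B}\,\overline{A}\bigr) = \overline{\tr(AB)}.$$
Consequently, for $A\in\mathfrak{n}^\mathrm U_A$ and $B\in(\mathfrak{n}^\mathrm U_A)^t$, $\tr(AB)=\overline{\tr(AB)}\in \BF_q$, so $\widetilde{\tr}(A,B) = 2\tr(AB)$. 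Since $q$ is odd, $\widetilde{\tr}$ is non-degenerate if and only if $\tr$ restricts to a non-degenerate $\BF_q$-bilinear form on $\mathfrak{n}^\mathrm U_A\times(\mathfrak{n}^\mathrm U_A)^t$.

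The descent itself is then formal. Suppose $A\in\mathfrak{n}^\mathrm U_A$ satisfies $\tr(AB)=0$ for every $B\in(\mathfrak{n}^\mathrm U_A)^t$. Since $(\mathfrak{n}^\mathrm U_A)^t$ is an $\BF_q$-form of $(\caln^\mathrm U_{A,\BF_{q^2}})^t$, every element $B'$ of the latter is an $\BF_{q^2}$-linear combination of elements of $(\mathfrak{n}^\mathrm U_A)^t$, so $\tr(AB')=0$ for all $B'\in(\caln^\mathrm U_{A,\BF_{q^2}})^t$; the $\BF_{q^2}$-perfectness established above forces $A=0$. The symmetric argument proves non-degeneracy in the other variable. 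I expect the main obstacle to be the bookkeeping needed to verify the semilinearity of $F$ and the equivariance formula, because the interaction of $J_{2n}$ with the $(d,n-d)$ block decomposition of $V$ is somewhat intricate; once this is carried out, the remainder of the argument is pure Galois descent.
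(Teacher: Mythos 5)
Your proof is correct, but it takes a genuinely different route from the paper's. The paper argues entirely by hand: it writes $X$ and $Y$ in blocks $B_1,B_2,B_3$ and $A_1,A_2,A_3$ subject to the unitarity constraints and kills the blocks one at a time by specializing the test element --- first $A_1=A_3=0$ to get $2\tr(B_2A_2)=0$ and hence $B_2=0$, then $A_2=0$ together with a scaling trick (choose $c=\tr(B_1C_1)\ne 0$ with $c+c^q=0$ and rescale by $a\in\BF_{q^2}$ to force $a=a^q$ for all $a$, a contradiction) to get $B_1=0$ and hence $B_3=0$. You instead drop the Galois constraints, note that the trace pairing is perfect over $\BF_{q^2}$ on the shape-fixed spaces, identify $\mathfrak{n}^{\mathrm U}_A$ and $(\mathfrak{n}^{\mathrm U}_A)^t$ as the fixed points of the semilinear involution $F(X)=-J_{2n}\overline{X^t}J_{2n}$, verify the equivariance $\tr(F(A)F(B))=\overline{\tr(AB)}$, and descend; I checked that $F$ does preserve the block shape and that its fixed points reproduce exactly the constraints of Table \ref{2}. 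Your route treats all three blocks uniformly, yields the clean identity $\widetilde{\tr}(A,B)=2\tr(AB)$ on the fixed spaces as a byproduct, and in fact supplies the justification the paper elides at the step ``Thus $B_2=0$'' (non-degeneracy of the restricted trace form on the constrained $d\times d$ block over $\BF_q$); the paper's computation is more elementary and self-contained. The one point you should spell out is the spanning claim in the descent, namely that the $F$-fixed points generate the whole space over $\BF_{q^2}$: this follows from writing $B'=\tfrac12\bigl(B'+F(B')\bigr)+\tfrac12\mu^{-1}\cdot\mu\bigl(B'-F(B')\bigr)$ with $\mu\in\BF_{q^2}$ nonzero and $\mu^q=-\mu$ (such $\mu$ exists since $q$ is odd), both summands being $\BF_{q^2}$-multiples of $F$-fixed elements. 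With that line added, your argument is complete.
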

\begin{proof}
    Let $X=\begin{pmatrix}
 0&B\\
 0&0 \end{pmatrix}\in\caln_{A}^{\mathrm{U}}$ and $Y=\begin{pmatrix}
 0&0\\
 A&0 \end{pmatrix}\in(\caln_{A}^{\mathrm{U}})^t$, where $B=\begin{pmatrix}
 B_1&B_2\\
 0&B_3 \end{pmatrix}$ and $A=\begin{pmatrix}
 A_1&0\\
 A_2&A_3 \end{pmatrix}$. By definition, $B_1=-J_d\overline{B_3^t}J_{n-d}$, $B_2=-J_d\overline{B_2^t}J_d$, $A_1=-J_{n-d}\overline{A_3^t}J_d$ and $A_2=-J_d\overline{A_2^t}J_d$. If $\widetilde{\tr}(X\cdot Y)=0$ for all $Y\in (\caln_{A}^{\mathrm{U}})^t$, we get $W=\mathrm{tr}(B_1A_1+B_2A_2+B_3A_3)+\mathrm{tr}(\overline{B_1A_1+B_2A_2+B_3A_3})=0$ for all $A$. First, letting $A_1=A_3=0$, we have $W=2\mathrm {tr}(B_2A_2)=0$. Thus,  $B_2=0$. Next, letting $A_2=0$, we have $W=2\mathrm{tr}(B_1A_1)+2\mathrm{tr}(\overline{B_1A_1})=0$. If $B_1\neq 0$, then there exists a matrix $C_1$ such that $\mathrm{tr}(B_1C_1)=c\neq 0$ and $c+c^q=0$. For all ${a}\in \BF_{q^2}$, we also have $ac+(ac)^q=0$. Thus, $a=a^q$ since $c\neq0$. That is a contradiction, and the map is non-degenerate.
\end{proof}
\section{Character Formulas via Coadjoint Orbits}\label{4}
   \subsection{Clifford theory and coadjoint orbits}\label{2.2}
   Let us review Clifford theory and coadjoint orbit theory \cite{Se77,Kir99}. 
	 Assume that a finite group $G =A\rtimes H$ is a semidirect product of $A$ by $H$, where $A$ is an abelian normal subgroup of $G$. Let $\widehat{A}$ be the set of all irreducible characters of $A$. Let $h\in H$ act on $\widehat A$ by $h\cdot \chi=\chi^h,$ where $\chi^h\in \widehat A$ is given by 
	 $$\chi^h(a)=\chi(h^{-1}ah) \text{ for }a\in A.$$
	 Let $H_\chi=\{h\in H\ |\ \chi^h=\chi\}$  be the stabilizer of $\chi$ in $H$. For $\chi\in\widehat A$, we identify it with a character of $A\rtimes H_{\chi}$ by $\chi(ah):=\chi(a)$, where $a\in A$ and $h\in H_{\chi}$. For $\tau\in \widehat H_\chi$, we identify it with a character of $A\rtimes H_{\chi}$ via the canonical projection $A\rtimes H_{\chi}\to H_{\chi}$.                       
	 Then the irreducible characters of $G$ can be described in terms of induced characters of the following form:
	 
	 \begin{thm}\cite[Section 8.2, Proposition 25]{Se77}\label{Clifford} For
	 	$\chi, \ \chi'\in\widehat A$, and $\tau, \ \tau'\in \widehat H_\chi,$ the following holds.
	 	\begin{enumerate}
	 		\item  
	 		$\Ind_{A\rtimes H_{\chi} }^G\chi \otimes\tau$ is an irreducible character of $G$. 
	 		\item $\Ind_{A\rtimes H_{\chi} }^G\chi \otimes\tau\cong \Ind_{A\rtimes H_{\chi'} }^G\chi' \otimes\tau'$ if and only if $\chi$ and $\chi'$ are in the same $H$-orbit, and $\tau=\tau'$.
	 		\item Every irreducible character $\pi$  of $G$ is isomorphic to 
	 		$\Ind_{A\rtimes H_{\chi}}^G\chi \otimes\tau,$ for some $\chi   \in\widehat A$, and $\tau  \in \widehat H_\chi.$
	 	\end{enumerate}	 	
	 \end{thm}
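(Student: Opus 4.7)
The plan is to follow the classical Clifford--Mackey strategy for a semidirect product $G = A \rtimes H$ with abelian normal kernel. The whole argument reduces to two ingredients: (i) each $\chi \in \widehat A$ admits a canonical extension to a one-dimensional character of the larger group $A \rtimes H_\chi$, and (ii) Mackey's irreducibility criterion, combined with an orbit-theoretic decomposition of restrictions to $A$, controls both when the induced representation is irreducible and when two such inductions coincide.

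First I would verify the extension step. Since $A$ is abelian, $\chi$ is one-dimensional, and the formula $(\chi \otimes \tau)(ah) := \chi(a)\tau(h)$ for $a \in A$, $h \in H_\chi$ defines a genuine one-dimensional character of $A \rtimes H_\chi$; multiplicativity uses precisely the defining property $\chi(h^{-1} a h) = \chi(a)$ for $h \in H_\chi$. This construction exhausts the irreducible representations of $A \rtimes H_\chi$ whose restriction to $A$ is $\chi$-isotypic, since any such representation twisted by the extended $\chi^{-1}$ becomes trivial on $A$ and thus factors through $H_\chi$.

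Next, for part (1), I would invoke Mackey's irreducibility criterion for $\Ind_{A \rtimes H_\chi}^G (\chi \otimes \tau)$. Normality of $A$ in $G$ makes the relevant double cosets parametrized by $H / H_\chi$: for any $h \in H \setminus H_\chi$ the twist $(\chi \otimes \tau)^h$ restricts on $A$ to $\chi^h \neq \chi$, so the two pieces are already disjoint on $A$ and hence on the intersection $A \rtimes (H_\chi \cap h H_\chi h^{-1})$. For part (2), restricting the induced representation back to $A$ yields the sum of characters in the $H$-orbit of $\chi$, so two such inductions can be isomorphic only if $\chi$ and $\chi'$ lie in one $H$-orbit; reducing to $\chi = \chi'$, Frobenius reciprocity (or the orthogonality of characters of $H_\chi$) then forces $\tau = \tau'$.

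For part (3), given an arbitrary irreducible $\pi$ of $G$, classical Clifford theory applied to the abelian normal subgroup $A$ forces $\pi \cong \Ind_{A \rtimes H_\chi}^G \rho$ for some $\chi$ in an $H$-orbit on $\widehat A$ and some irreducible $\rho$ of $A \rtimes H_\chi$ that is $\chi$-isotypic on $A$; the extension step above then identifies $\rho$ with $\chi \otimes \tau$ for a unique $\tau \in \widehat H_\chi$. I expect the only delicate point --- and the only place where the abelian hypothesis on $A$ enters essentially --- to be this descent step, since without a one-dimensional $\chi$ extending to $A \rtimes H_\chi$ the argument would produce instead a projective representation of $H_\chi$, and the clean bijective parametrization in (2) would have to be modified by a cocycle twist.
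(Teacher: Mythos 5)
The paper offers no proof of this statement---it is quoted directly from Serre \cite[\S 8.2, Prop.~25]{Se77}---and your argument is precisely the standard Wigner--Mackey ``little groups'' proof that Serre himself gives: extend the $H_\chi$-invariant character $\chi$ to a one-dimensional character of $A\rtimes H_\chi$, apply Mackey's irreducibility criterion together with restriction to $A$ for parts (1) and (2), and use Clifford theory on the abelian normal subgroup $A$ for part (3). The outline is correct; the only slip is calling $\chi\otimes\tau$ one-dimensional, which holds only when $\tau$ is (true in the paper's application since the relevant $H_\chi$ is abelian, but not required by the general statement).
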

 	 Fix a nontrivial one-dimensional character $\psi$ of $\BF_q$. Note that $A^{X}=I+B_X$ is an abelian group with $B_X=\Lie(A^{X})= \caln_{A}^{X}$ ($X=\mathrm C ,\mathrm D$). By Lemma \ref{tr} and Lemma \ref{tr2},
	 $$\widehat A^{X}=\{\psi_T\  | \ T\in B_X^t\},$$ where $$\psi_T(I+x):=\psi(\tr Tx),\text{ for }x\in B_X.$$
     Also, we have $A^{\mathrm U}=I+B_{\mathrm U}$ is an abelian group with $B_{\mathrm U}=\Lie(A^{\mathrm U})= \caln_{A}^{\mathrm U}$. By Lemma \ref{tru},
     $$\widehat A^{\mathrm U}=\{\psi_T\  | \ T\in B_\mathrm U^t\},$$ where $$\psi_T(I+x):=\psi(\widetilde{\tr} Tx),\text{ for }x\in B_{\mathrm U}.$$
	 \begin{defn} 
	 For a matrix $c=(c_{i,j})\in \caln^{X,d}$,
	 denote by 
	 \begin{equation}\label{Aset}
	 \Supp(c):=\{(i, j) \mid c_{i,j}\ne 0\}.\end{equation}  
	 For a subset $M\subset \caln^{X,d}$, 
	 denote by 
	 \begin{equation}\label{set}
	 \Supp(M):=\bigcup_{c\in  M}\Supp(c).
	 \end{equation}  
	 	\end{defn} 
	For $A^{\mathrm C}=I+B_{\mathrm C}$, we may identify its dual space with $B_{\mathrm C}^t$ via the trace map.
	 The coadjoint action of $g\in H^\mathrm C$ on $\alpha\in B_{\mathrm C}^t$ in \cite{Kir99} is given by
	 \begin{equation}\label{eq:Kirillov}
	 (g\cdot \alpha)(I+b)=\tr(\alpha g^{-1}bg), \text{ for } b\in B_{\mathrm C}.	
	 \end{equation} 
	 It can be identified with $\alpha \mapsto [g\alpha g^{-1}]_{B_{\mathrm C}}$, where $[\cdot]_{B_{\mathrm C}}$ is  the projection from $\RMat_{2n}(\BF_q)$ to $B_{\mathrm C}^t$.
	 More precisely,
	 for $m=(m_{i,j})\in \bigcup_{g\in  H_{\mathrm C}}gB_{\mathrm C}^tg^{-1}$, 
	 \[
	 ([m]_{B_{\mathrm C}})_{i,j}=m_{i,j} \text{ if }(j,i)\in \Supp({B_{\mathrm C}}),
	 \text{ and } ([m]_{B_{\mathrm C}})_{i,j}=0	\text{ if }(j,i)\notin \Supp({B_{\mathrm C}}). 
	 \]
	 Define the action of $H^\mathrm C$ on $B_{\mathrm C}^t$ by
	 \begin{equation}     
	 g\circ_{B_{\mathrm C}} \alpha:=[g\alpha g^{-1}]_{B_{\mathrm C}}\text{ for }g\in H^\mathrm C\text{ and }\alpha\in B_{\mathrm C}^t.\end{equation}
	 Since $(g\cdot \alpha)(I+b)=\tr([g\alpha g^{-1}]_{B_{\mathrm C}}b)$ for all $b\in B_{\mathrm C}$,
	 the action $\circ_{B_{\mathrm C}}$ of $H^\mathrm C$ on $B_{\mathrm C}^t$ coincides with the {\itshape coadjoint action}, whose orbits are called {\itshape coadjoint orbits}. The definition of {\itshape coadjoint orbits} for type $\mathrm D$ are similar to that of type $\mathrm C$, while for type $\mathrm{U}$ it is necessary to replace $\widetilde{\tr}$ with $\tr$ in Equation \eqref{eq:Kirillov}.

\subsection{ Characters number formula} 
Recall ${R_u^{X,d}}\simeq A^{X}\rtimes H^{X}$.
When $X=\mathrm C$, we simply write $P_\mathrm C(B_1,B_3,B_2,0)\in\Lie(A^{\mathrm C})^t$  for $$\begin{pmatrix}
 0_{d\times d}&0&0&0\\
 0&0_{(n-d)\times (n-d)}&0&0\\
 B_1&B_3&0_{d\times d}&0\\
 B_2&0&0&0_{(n-d)\times (n-d)}
	 \end{pmatrix},$$ \\
where $B_1$ is symmetric and $B_3^t=B_2$.

 Let $$V_\mathrm C(A):=\begin{pmatrix}
 {I}_{d}&A&0&0\\
 0&{I}_{n-d}&0&0\\
 0&0&{I}_{d}&0\\
 0&0&{-A^t}&{I}_{n-d}
	 \end{pmatrix},$$\\
where $A$ be the matrix which take the values in $\BF_q$. Consider the coadjoint action of $V_{\mathrm C}(A)$ on $P_{\mathrm C}(B_1,B_3,B_2,0)$. That is, \begin{align*}
      &[V_{\mathrm C}(A)\cdot P_{\mathrm C}(B_1,B_3,B_2,0)\cdot V_{\mathrm C}(-A)]_{B_{\mathrm C}}\\&=P_{\mathrm C}(B_1,B_3,B_2,0)+[P_{\mathrm C}(0,-B_1A,-A^tB_1,A^tB_1A-B_2A-A^tB_2)]_{B_{\mathrm C}}.
  \end{align*}
  It is clear that
\begin{equation}\label{e01}
      [P_{\mathrm C}(0,-B_1A,-A^tB_1,A^tB_1A-B_2A-A^tB_2)]_{B_{\mathrm C}}=0 
  \end{equation}
  gives  \textit{algebraic} equations in variables $\mathbf{a}_{i,j}$ for $(i,j)\in\Supp(\caln_{H}^{\mathrm C})$, while\begin{equation}\label{e01}
      [P_{\mathrm C}(0,-B_1A,-A^tB_1,0)]_{B_{\mathrm C}}=0
  \end{equation}
  gives \textit{linear} equations.

   The results for ${R_u^{{\mathrm D},d}}$ are similar. The difference is $B_1 = -B_1^T$ and $B_2^T = -B_3$.

  When $X= \mathrm U$, $P_\mathrm C(B_1,B_3,B_2,0)$ is replaced by 
  $$P_ \mathrm U(B_1,B_3,B_2,0):=\begin{pmatrix}
 0_{d\times d}&0&0&0\\
 0&0_{(n-d)\times (n-d)}&0&0\\
 B_1&0&0_{(n-d)\times (n-d)}&0\\
 B_2&B_3&0&0_{d\times d}
	 \end{pmatrix},$$ \\
where $B_2J_d$ is skew-Hermitian and $B_1=-J_{n-d}\overline{B_3^t}J_d$. The matrix $V_\mathrm C(A)$ is replaced by 
$$V_ \mathrm U(A):=\begin{pmatrix}
 {I}_{d}&A_1&0&0\\
 0&{I}_{n-d}&0&0\\
 0&0&{I}_{n-d}&A_2\\
 0&0&0&{I}_{d}
	 \end{pmatrix},$$
where $A_1$ is be the matrix which take the values in $\BF_{q^2}$. Besides, $A_1=-J_d\overline{A_2^t}J_{n-d}$. Repeating the above process, we get \begin{equation}\label{e02}
      {[P_ \mathrm U(A_2B_2,-B_2A_1,0,0)]_{B_{\mathrm U}}=0},
  \end{equation}
 which gives \textit{linear} equations in variables $\mathbf{a}_{i,j}$ for $(i,j)\in\Supp(\caln_{H}^{\mathrm U})$. 
  We denote the coefficient matrix of (\ref{e01}) and (\ref{e02}) by $\ScP_X(B_1,B_3,B_2,0)$ under a fixed total order of $\mathbf{a}_{i,j}$ ($X=\mathrm C,\mathrm D, \mathrm U$).
  Define $$\mathfrak{P}_X({R_u}):=\{\ScP_X(B_1,B_3,B_2,0)\mid P_X(B_1,B_3,B_2,0)\in\Lie({A}^{X})^t\}.$$
  Let $$\pi_c: \Lie(A^{X})^t\to \mathfrak{P}_X({R_u}), \quad P_X(B_1,B_3,B_2,0)\mapsto \ScP_X(B_1,B_3,B_2,0)$$ be the canonical map.
  For $e\in\BN$, define 
  \begin{equation}\label{eqru}
     \mathfrak{P}_X({R_u},e):=\{\ScP_X(B_1,B_3,B_2,0)\in\frP_X({R_u})\mid \rank(\ScP_X(B_1,B_3,B_2,0)=e\}.
  \end{equation} Then $\mathfrak{P}_X({R_u})=\bigcup_{e=0}^{\infty}\mathfrak{P}_X({R_u},e)$ is a finite partition of $\mathfrak{P}_X({R_u})$. Denote the fiber of $\pi_c$ over $\mathfrak{P}_X({R_u}, e)$ by $\Tilde{\mathfrak{P}}_X({R_u}, e)$.

  Let $\psi: (\caln_{A}^{\mathrm X})^t\to \widehat A^{X},T\mapsto\psi_T$ ($X=\mathrm C, \mathrm D,\mathrm U$). We have the following lemma that establishes a one-to-one correspondence between the $H^{X}$-orbit on $\widehat A^{X}$ and the coadjoint orbits on $(\caln_{A}^{\mathrm X})^t$.
   \begin{lem}
       $\psi$ is compatible with the action of $H^{X}$ on $\widehat A^{X}$ and the coadjoint action of $H^{X}$ on $(\caln_{A}^{\mathrm X})^t$.
   \end{lem}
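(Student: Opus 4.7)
The plan is to unfold both sides of the desired equivariance $\psi_{h\circ_{B_X} T} = \psi_T^h$ directly from the definitions and reduce it to the cyclicity of the trace together with the observation that the trace pairing with $B_X$ only sees the projection $[\,\cdot\,]_{B_X}$. Nothing beyond the perfect pairings of Lemmas~\ref{tr}, \ref{tr2}, \ref{tru} and the definition of the action $\circ_{B_X}$ should be required.

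First I would fix $h\in H^X$, $T\in B_X^t$, and an arbitrary element $I+x\in A^X$ with $x\in B_X$. Since $A^X$ is the normal factor in the semidirect product $R_u^{X,d}\simeq A^X\rtimes H^X$, the conjugate $h^{-1}(I+x)h$ lies in $A^X$, and a direct matrix computation yields $h^{-1}(I+x)h=I+h^{-1}xh$ with $h^{-1}xh\in B_X$. Unwinding the definitions of the $H^X$-action on $\widehat{A^X}$ and of $\psi_T$ for types $\mathrm C$ and $\mathrm D$, one gets
\[
\psi_T^h(I+x)=\psi_T(I+h^{-1}xh)=\psi\bigl(\tr(T\cdot h^{-1}xh)\bigr)=\psi\bigl(\tr(hTh^{-1}\cdot x)\bigr)
\]
by cyclicity of the trace. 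Although $hTh^{-1}$ need not lie in $B_X^t$, for any $x\in B_X$ the quantity $\tr(y\cdot x)=\sum_{i,j}y_{i,j}x_{j,i}$ depends only on the entries $y_{i,j}$ with $(j,i)\in\Supp(B_X)$; by the definition of the projection $[\,\cdot\,]_{B_X}$ recalled in \S\ref{2.2}, this gives $\tr(hTh^{-1}\cdot x)=\tr([hTh^{-1}]_{B_X}\cdot x)$. Combining, $\psi_T^h(I+x)=\psi\bigl(\tr([hTh^{-1}]_{B_X}\cdot x)\bigr)=\psi_{h\circ_{B_X}T}(I+x)$, as required.

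For the unitary case, the argument is formally identical after replacing $\tr$ by $\widetilde{\tr}$. The cyclicity $\widetilde{\tr}(AB)=\widetilde{\tr}(BA)$ is immediate from that of $\tr$, and the support-based projection argument goes through verbatim because $\widetilde{\tr}(y\cdot x)$ for $x\in B_{\mathrm U}$ still depends only on the entries of $y$ at the positions dictated by $\Supp(B_{\mathrm U})$. There is no genuine obstacle in this lemma; the only point worth stating carefully is the normality of $A^X$, so that $h^{-1}xh$ already lies in $B_X$ and no extra projection is needed on the $A$-side. This is built into the semidirect product structure established in Section~\ref{section 2}.
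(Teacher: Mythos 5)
Your proof is correct and follows essentially the same route as the paper, which simply asserts that both $\psi(g\circ_{B_X}a)$ and $g\cdot\psi(a)$ equal $\psi_{[gag^{-1}]_{B_X}}$, relying on the identity $\tr(\alpha g^{-1}bg)=\tr([g\alpha g^{-1}]_{B_X}b)$ already recorded in Section~\ref{2.2}. You merely make explicit the steps (normality of $A^X$, cyclicity of the trace, and the support-based projection) that the paper leaves implicit, including the routine adaptation to $\widetilde{\tr}$ in the unitary case.
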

   \begin{proof}
       We give just the proof of type $\mathrm C$. Let $g\in H^{\mathrm C}$, $a\in (\caln_{A}^{\mathrm C})^t$. We have
$$\psi(g\circ_{B_{\mathrm C}} a)=\psi_{[gag^{-1}]_{B_{\mathrm C}}},$$
$$g\circ_{B_{\mathrm C}} \psi(a)=\psi_{[gag^{-1}]_{B_{\mathrm C}}}.$$
   \end{proof}

We no longer distinguish between the coadjoint orbit $\mathbb{O}_{a}$ in $(\caln_{A}^{\mathrm X})^t$ and its corresponding orbit $\mathbb{O}_{\psi _a}$ in $\widehat A^{X}$, as well as the stabilizer subgroups of $a\in (\caln_{A}^{\mathrm X})^t$ and $\psi _a\in \widehat A^{X}$.
    \begin{thm}[Characters number formula]\label{TYPE C}
        The number of irreducible characters of degree $|k|^e$ in ${R_u^{X,d}}$ is $$ |k|^{-2e}\cdot|\Tilde{\mathfrak{P}}_X({R_u}, e)|\cdot|H^X|.$$ Furthermore, the set of integers that occur as degrees of irreducible characters of ${R_u^{X,d}}$ is precisely $\{|k|^{e}\mid \mathfrak{P}_X({R_u},e)\neq \emptyset\}$ $($$|k|=q$ if $X=\mathrm C$ or $\mathrm D$, $|k|=q^2$ if $X=\mathrm{U}$$)$.
    \end{thm}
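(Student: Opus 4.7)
The plan is to apply Clifford theory to the semidirect-product decomposition $R_u^{X,d} \simeq A^X \rtimes H^X$, transport the resulting orbit count to coadjoint orbits on $\Lie(A^X)^t$ via the lemma immediately preceding the theorem, and read off the stabilizer sizes from the rank of the coefficient matrix $\ScP_X$.

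First I would observe that $H^X$ is abelian. A direct multiplication using Table \ref{2} gives $V_X(A)V_X(A')=V_X(A+A')$ for all admissible parameters, so $H^X$ is isomorphic to an additive matrix group. In particular every subgroup $H_\chi \leq H^X$ is abelian, hence every irreducible character $\tau \in \widehat{H_\chi}$ is one-dimensional and $|\widehat{H_\chi}|=|H_\chi|$. By Theorem \ref{Clifford}, every irreducible character of $R_u^{X,d}$ is of the form $\Ind_{A^X \rtimes H_\chi}^{R_u^{X,d}}(\chi \otimes \tau)$, two such induced characters coincide if and only if the pairs $(\chi,\tau)$ agree up to the $H^X$-action on $\chi$, and the degree of such an induced character equals $[H^X:H_\chi] = |\mathbb{O}_\chi|$.

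Next I would translate the orbit count into a rank condition. The lemma just above identifies $H^X$-orbits on $\widehat{A^X}$ with coadjoint orbits on $(\caln_A^X)^t$, carrying the stabilizer of $\psi_T$ to the coadjoint stabilizer of $T$. Using the explicit expansion of $V_X(A)\cdot T \cdot V_X(-A)-T$ given just above Equation \eqref{e01}, together with the fact (read off from the support description of $B_X$ in Table \ref{2}) that $[\cdot]_{B_X}$ annihilates the $(4,2)$ block in which the only quadratic contribution $A^t B_1 A$ would land for types $\mathrm C$ and $\mathrm D$, the stabilizer equation reduces to a purely linear system in the parameter $A$ whose coefficient matrix is, by definition, $\ScP_X(B_1, B_3, B_2, 0)$; for type $\mathrm U$ linearity is immediate from \eqref{e02}. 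Consequently
\[
|H_{\psi_T}| = |k|^{\dim H^X - \rank \ScP_X(T)}, \qquad |\mathbb{O}_{\psi_T}| = |k|^{\rank \ScP_X(T)}.
\]

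To assemble the count, I would note that, by the definition of $\Tilde{\mathfrak{P}}_X(R_u,e)$, the condition $|\mathbb{O}_{\psi_T}|=|k|^e$ is exactly the condition $T \in \Tilde{\mathfrak{P}}_X(R_u,e)$. Hence the number of $H^X$-orbits in $\widehat{A^X}$ of size $|k|^e$ equals $|\Tilde{\mathfrak{P}}_X(R_u,e)|/|k|^e$, and for each such orbit Clifford theory contributes $|\widehat{H_\chi}| = |H_\chi| = |H^X|/|k|^e$ distinct characters of degree $|k|^e$. Multiplying yields the asserted formula $|k|^{-2e}\cdot|\Tilde{\mathfrak{P}}_X(R_u,e)|\cdot|H^X|$. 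The second assertion is then immediate: a degree $|k|^e$ actually occurs if and only if some $T \in \Lie(A^X)^t$ has $\rank \ScP_X(T)=e$, i.e., $\mathfrak{P}_X(R_u,e) \neq \emptyset$. The main thing to verify carefully is precisely the passage from the coadjoint difference to a linear system in $A$---for types $\mathrm C,\mathrm D$ one must check that the apparently quadratic block $A^t B_1 A - B_2 A - A^t B_2$ lies outside $\Supp(B_X^t)$ and is therefore killed by $[\cdot]_{B_X}$---after which the rest of the argument is a straightforward double count.
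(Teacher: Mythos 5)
Your proposal is correct and follows essentially the same route as the paper: Clifford theory for $A^X\rtimes H^X$, identification of $H^X$-orbits on $\widehat{A^X}$ with coadjoint orbits via the trace pairing, reading the stabilizer index off the rank of $\ScP_X$, and the same double count $(|\Tilde{\mathfrak{P}}_X(R_u,e)|/|k|^e)\cdot(|H^X|/|k|^e)$. Your explicit check that the quadratic block $A^tB_1A-B_2A-A^tB_2$ is killed by $[\cdot]_{B_X}$ (so the stabilizer condition is genuinely linear) is a point the paper leaves implicit, and it is worth keeping.
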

    \begin{proof}
         We give just the proof of ${R_u^{\mathrm C,d}}$ ($k=\BF_q$). Fix a nontrivial one-dimensional character $\psi$ of $\BF_{q}$. Then $\hat{A}^\mathrm C=\{\psi_{T}\mid T\in(\caln_{A}^{\mathrm C})^t\}$,  which we identify $\hat{A}^\mathrm C$ with $(\caln_{A}^{\mathrm C})^t$ via $T\leftrightarrow \psi_{T}$ by Lemma \ref {tr}. Moreover, the action of $H^\mathrm C$ on $\hat{A}^\mathrm C$ coincides with the coadjoint action of $H^\mathrm C$ on $(\caln_{A}^{\mathrm C})^t$. Thus, we can identify the $H^\mathrm C$-orbits on $\hat{A}^\mathrm C$ with the coadjoint orbits of $H^\mathrm C$ on $(\caln_{A}^{\mathrm C})^t$. For a coadjoint orbit $\mathbb{O}_{a}$ with $a\in\mathbb{O}_{a}$, let $H_a^\mathrm C$ be the stabilizer of $a$ in $H^\mathrm C$. Clearly, $H_a^\mathrm C$ is an abelian group, and $\hat{H}_a^\mathrm C$ consists of $|{H}_a^\mathrm C|$ one-dimensional irreducible characters. By Theorem \ref{Clifford}, $\BO_{a}$ gives $|{H}_a^\mathrm C|$ irreducible characters of ${R_u^{\mathrm C,d}}$ of degree $[H^\mathrm C:{H}_a^\mathrm C]$, where $[H^\mathrm C:{H}_a^\mathrm C]$ is the index of ${H}_a^\mathrm C$ in $H^\mathrm C$. We write $$V_\mathrm C(A)=I+\begin{pmatrix}
             0_{d\times d}&A&0&0\\
             0&0_{(n-d)\times (n-d)}&0&0\\
             0&0&0_{d\times d}&0\\
             0&0&-A^t&0_{(n-d)\times(n-d)}
         \end{pmatrix}\in H^\mathrm C,$$ where $A=(a_{i,j})$ , $a_{i,j}=0$ if $(i,j)\notin\Supp(\caln_{H}^{\mathrm U})$. We have $${H}_a^\mathrm C=\{V_\mathrm C(A)\in H^\mathrm C\mid (a_{i,j})\ \text{is a solution of } (\ref{e01})\}.$$ We write $a=P_\mathrm C(B_1,B_3,B_2,0)$, then $|{H}_a^\mathrm C|=q^{-\rank(\ScP_\mathrm C(B_1,B_3,B_2,0))}\cdot |H^\mathrm C|$, and the degree of irreducible characters associated with $\BO_{a}$ is $q^{\rank(\ScP_C(B_1,B_3,B_2,0)}$. We denote the number of $H^\mathrm C$-coadjoint orbits with stabilizer (of a representative element) of index $q^{e}$ by $N_{e}$, then the number of irreducible characters of ${R_u^{\mathrm C,d}}$ of degree $q^{e}$ is $$N_{e}\cdot q^{-e}\cdot |H^\mathrm C|.$$
        Let $$\calO_{e}:=\bigcup\limits_{[H^\mathrm C:{H}_a^\mathrm C]=q^{e}}\BO_{a}$$ be the set of union of all coadjoint orbits whose stabilizer in $H^\mathrm C$ has index $q^{e}$. Then $$\calO_{e}=\{P_\mathrm C(B_1,B_3,B_2,0)\in(\caln_{A}^{\mathrm C})^t\mid \rank(\ScP_\mathrm C(B_1,B_3,B_2,0))=e\},$$ which is just the set $\tilde{\frP}_\mathrm C({R_u},e)$. Since the cardinality of an $H^\mathrm C$-orbit is equal to the index of its stabilizer, the coadjoint orbits in $\calO_{e}$ have the same cardinality $q^{e}$. Hence $N_{e}=q^{-e}\cdot |\Tilde{\frP}_\mathrm C({R_u},e)|$, and the number of irreducible characters of ${R_u^{\mathrm C,d}}$ of degree $q^e$ is 
        $$q^{-2e}\cdot|\Tilde{\frP}_\mathrm C({R_u},e)|\cdot|H^\mathrm C|.$$ 
        
        The rest part is clear, since the map $\pi_c$ is surjective and all irreducible characters can be given as above. The proof for ${R_u^{\mathrm D,d}}$ follows a pattern similar to the one presented here. For ${R_u^{\mathrm U,d}}$, it is necessary to use Lemma \ref{tru} instead of Lemma \ref{tr}. Furthermore, we must replace $q$ with $q^2$ in the relevant equations. Following these adjustments, the resultant formula remains consistent with the original.
    \end{proof} 

  
\begin{rmk}
   In the case of $d=n$, the group ${R_u^{X,n}}$ is abelian. Similarly, in the case of $d=n-1$, the group ${R_u^{\mathrm D,n-1}}$ is also abelian. It is clear that the number of irreducible characters is equal to the number of their elements.
\end{rmk}
\section{The polynomial property of ${R_u^{X,d}}$}\label{geo poly}

\subsection{Polynomial property for unipotent radicals of maximal parabolic subgroups}
We define ${R_u^{X,d}}$ as having the \textit{polynomial property} if the number of its irreducible characters with degree $q^e$ can be expressed as a polynomial in $q$ with integral coefficients for any given $e\in\BN$.


According to Theorem \ref{TYPE C}, the problem of determining the polynomial property of ${R_u^{X,d}}$ can be simplified to assess the polynomial property of $\tilde{\frP}_X({R_u},e)$ instead. In other words, ${R_u^{X,d}}$ possesses a polynomial property if and only if $|\tilde{\frP}_X({R_u},e)|$ is a polynomial in $q$ with integral coefficients.




\begin{lem}\label{sp2n}\cite[Theorem 2]{Mac69}
   Let $N(n,r)$ denote the number of symmetric matrices of size $n\times n$ and rank $r$, with entries in $\BF_q$. Then,
\begin{equation}\label{N2e}
     N(n,2s)=\prod \limits_{i=1}^s \frac{q^{2i}}{q^{2i}-1} \prod \limits_{i=0}^{2s-1} (q^{n-i}-1),
\end{equation}
 \begin{equation}\label{N2o}
    N(n,2s+1)=\prod \limits_{i=1}^s \frac{q^{2i}}{q^{2i}-1} \prod \limits_{i=0}^{2s} (q^{n-i}-1).
\end{equation}
\end{lem}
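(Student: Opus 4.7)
The plan is to establish the formula in two steps. First, I would reduce the general count $N(n,r)$ to the count of invertible symmetric matrices $N(r,r)$ via the factorization
\begin{equation*}
N(n,r) \;=\; \binom{n}{r}_q\cdot N(r,r).
\end{equation*}
This follows from the bijection between rank-$r$ symmetric matrices $A$ and pairs $(K,\bar A)$, where $K=\ker A$ is an $(n-r)$-dimensional subspace of $\BF_q^n$ (contributing the Gaussian binomial $\binom{n}{r}_q$) and $\bar A$ is a non-degenerate symmetric bilinear form on $\BF_q^n/K$: symmetry of $A$ together with $AK=0$ forces the bilinear form $(v,w)\mapsto v^{t}Aw$ to vanish as soon as one argument lies in $K$, so it descends to, and is uniquely recovered from, the quotient $\BF_q^n/K$; fixing a basis of the quotient then identifies $\bar A$ with an invertible symmetric $r\times r$ matrix.

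Next, I would compute $N(r,r)$ by applying the orbit–stabilizer theorem to the congruence action $A\mapsto PAP^{t}$ of $\GL_r(\BF_q)$ on invertible symmetric $r\times r$ matrices. For $q$ odd and $r\ge 1$ there are exactly two orbits, indexed by the square class of $\det A$ in $\BF_q^{\times}/(\BF_q^{\times})^{2}$, and the stabilizer of a representative is the corresponding orthogonal group $\mathrm O_r^{\varepsilon}(\BF_q)$. Hence
\begin{equation*}
N(r,r) \;=\; |\GL_r(\BF_q)|\sum_{\varepsilon}\frac{1}{|\mathrm O_r^{\varepsilon}(\BF_q)|},
\end{equation*}
where the two summands coincide in the odd-rank case. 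Substituting the standard orders of $\GL_r(\BF_q)$ and of $\mathrm O_{2s}^{\pm}(\BF_q)$, $\mathrm O_{2s+1}(\BF_q)$, splitting into the cases $r=2s$ and $r=2s+1$, and combining with the expanded form of $\binom{n}{r}_q$ should produce \eqref{N2e} and \eqref{N2o}.

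The main obstacle is the algebraic simplification of the resulting $q$-expression: one must verify that the powers of $q$ coming from $|\GL_r|$, from the Gaussian binomial, and from the sum of reciprocals of orthogonal group orders collapse cleanly to the factor $\prod_{i=1}^{s}q^{2i}/(q^{2i}-1)$, while the factors of the form $q^{j}-1$ in numerators and denominators telescope to the single product $\prod_{i=0}^{r-1}(q^{n-i}-1)$. The critical cancellation in the even-rank case is the identity $|\mathrm O_{2s}^{+}|+|\mathrm O_{2s}^{-}|=4q^{s^{2}}\prod_{i=1}^{s-1}(q^{2i}-1)$, which is exactly what is needed to match the total $q$-exponent $q^{s(s+1)}=\prod_{i=1}^{s}q^{2i}$ appearing implicitly in the final product; the odd-rank simplification is entirely analogous using $|\mathrm O_{2s+1}(\BF_q)|=2q^{s^{2}}\prod_{i=1}^{s}(q^{2i}-1)$.
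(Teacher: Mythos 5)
Your argument is correct, but note that the paper does not prove this lemma at all: it is quoted verbatim from MacWilliams \cite[Theorem 2]{Mac69}, whose original derivation proceeds by a recursion on $n$ rather than by group actions. Your route --- factor $N(n,r)=\binom{n}{r}_q\,N(r,r)$ by recording the kernel, then count nonsingular symmetric matrices by orbit--stabilizer for the congruence action of $\GL_r(\BF_q)$ --- is a clean alternative, and it is in fact the same strategy the paper itself uses for the skew-symmetric and skew-Hermitian analogues (Lemmas \ref{so2n} and \ref{U2n}); there the authors skip the kernel reduction and instead compute the full stabilizer of the standard rank-$r$ representative inside $\GL_n$, which absorbs the Gaussian binomial into the block $q^{r(n-r)}|\GL_{n-r}|$, but the two bookkeepings are equivalent. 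The one genuinely new feature of the symmetric case is that (for $q$ odd, $r\ge 1$) there are two congruence orbits rather than one, and the quantity you actually need is the harmonic sum $|\mathrm{O}_r^{+}|^{-1}+|\mathrm{O}_r^{-}|^{-1}$ rather than $|\mathrm{O}_r^{+}|+|\mathrm{O}_r^{-}|$; since the two orders differ only in the factors $q^{s}\mp 1$, the identity you quote gives
\[
\frac{1}{|\mathrm{O}_{2s}^{+}|}+\frac{1}{|\mathrm{O}_{2s}^{-}|}
=\frac{q^{\,2s-s^{2}}}{\prod_{i=1}^{s}(q^{2i}-1)},
\]
and the remaining simplification to \eqref{N2e} and \eqref{N2o} is routine (I checked $N(2,2)=q^{2}(q-1)$ and $N(n,1)=q^{n}-1$ against both forms). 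So the proposal is a complete and correct proof plan, differing from the cited source but matching the spirit of the paper's own computations.
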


\begin{lem}\label{so2n}
   Let $S(n,2s)$ denote the number of skew-symmetric matrices in $\mathrm{Mat}_{n}(\BF_q)$ with rank $2s$. We have
   \begin{equation}\label{s2e}
   S(n,2s)=\frac{q^{s^2-s}\prod \limits_{i=0}^{2s-1} (q^{{n}-i}-1)} {\prod \limits_{i=1}^{s} (q^{2i}-1)}. 
\end{equation}
\end{lem}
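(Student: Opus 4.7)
The strategy is to classify skew-symmetric matrices under congruence and then apply the orbit-stabilizer formula. Over $\BF_q$ with $q$ odd, every skew-symmetric matrix $M \in \Mat_n(\BF_q)$ of rank $2s$ is congruent to the standard block form
$$J_{2s,n} := \begin{pmatrix} J_s & 0 \\ 0 & 0_{n-2s} \end{pmatrix}, \qquad J_s := \begin{pmatrix} 0 & I_s \\ -I_s & 0 \end{pmatrix},$$
via $M = P J_{2s,n} P^t$ for some $P \in \GL_n(\BF_q)$. Consequently the rank-$2s$ skew-symmetric matrices form a single orbit under the congruence action $P \cdot M := P M P^t$ of $\GL_n(\BF_q)$, so
$$S(n,2s) = \frac{|\GL_n(\BF_q)|}{|\Stab(J_{2s,n})|}.$$

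Next I would compute the stabilizer. Writing $P = \begin{pmatrix} A & C \\ D & E \end{pmatrix}$ in block form with $A \in \Mat_{2s}(\BF_q)$ and $E \in \Mat_{n-2s}(\BF_q)$, the equation $P J_{2s,n} P^t = J_{2s,n}$ decomposes blockwise into $A J_s A^t = J_s$, $A J_s D^t = 0$, and $D J_s D^t = 0$. Since $J_s$ is invertible, the first relation forces $A \in \mathrm{Sp}_{2s}(\BF_q)$ (in particular $A$ is invertible), whence the second gives $D = 0$. The invertibility of $P$ then requires $E \in \GL_{n-2s}(\BF_q)$, while $C \in \Mat_{2s \times (n-2s)}(\BF_q)$ is completely free. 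Therefore
$$|\Stab(J_{2s,n})| = |\mathrm{Sp}_{2s}(\BF_q)| \cdot |\GL_{n-2s}(\BF_q)| \cdot q^{2s(n-2s)}.$$

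The final step is a direct simplification using the classical orders $|\GL_m(\BF_q)| = q^{m(m-1)/2} \prod_{i=1}^m (q^i - 1)$ and $|\mathrm{Sp}_{2s}(\BF_q)| = q^{s^2} \prod_{i=1}^s (q^{2i} - 1)$. A short computation shows that the $q$-power exponent collapses to
$$\tfrac{n(n-1)}{2} - s^2 - \tfrac{(n-2s)(n-2s-1)}{2} - 2s(n-2s) = s^2 - s,$$
and the telescoping cancellation $\prod_{i=1}^n (q^i-1) / \prod_{i=1}^{n-2s}(q^i-1) = \prod_{j=0}^{2s-1}(q^{n-j}-1)$ (via the substitution $j = n - i$) yields the claimed identity. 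The main work beyond invoking the symplectic normal form is this bookkeeping in the exponent, and I anticipate no serious obstacle; an alternative sanity check is to verify the formula inductively in $s$ using the standard recursion for skew-symmetric rank counts.
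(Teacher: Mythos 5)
Your proof is correct and takes essentially the same route as the paper's: both argue that the congruence action of $\GL_n(\BF_q)$ is transitive on rank-$2s$ skew-symmetric matrices, identify the stabilizer of the standard block form as block upper-triangular matrices with diagonal blocks in $\mathrm{Sp}_{2s}(\BF_q)$ and $\GL_{n-2s}(\BF_q)$ and a free off-diagonal block, and apply the orbit--stabilizer formula. The only difference is that you spell out the final exponent bookkeeping, which the paper leaves implicit.
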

\begin{proof}
    Let $S:= S(n,2s)$. Consider $G=GL(n,q)$ action on $S$ via congruence. That is to say, $P$ actions on $X$ is $PXP^t$ ($P\in G$, $X\in S$) and this action is transitive because any two skew-symmetric matrix with the same rank are congruent. Let $G_X=\{P\in G\mid P\cdot X=X\}$ . Without loss of generality, let $$X=\begin{pmatrix} 
 0&\mathrm{Id}_{s}&0\\
 -\mathrm{Id}_{s}&0&0\\
 0&0&0_{n-2s}\\
 \end{pmatrix}=\begin{pmatrix} 
 E_{s}&0\\
 0&0\\
 \end{pmatrix},$$
 where $E_s=\begin{pmatrix} 
 0&\mathrm{Id}_{s}\\
 -\mathrm{Id}_{s}&0\\
 \end{pmatrix}$ is the matrix representing the standard non-degenerate symplectic form on a $2s$-dimensional space. Let $P\in GL(n,q)$ and decompose it as a block matrix $$\begin{pmatrix} 
 {A}_{2s\times 2s}&B_{2s\times (n-2s)}\\
 C_{ (n-2s)\times 2s}&D_{(n-2s)\times (n-2s)}\\
 \end{pmatrix}.$$
  $PXP^t=X$ if and only if $$\begin{pmatrix} 
 AE_sA^t&AE_sC^t\\
CE_sA^t&CE_sC^t\\
 \end{pmatrix}=\begin{pmatrix} 
 E_s&0\\
0&0\\
 \end{pmatrix}.$$
The equation $AE_sA^t=E_s$ implies that $A$ is symplectic and in particular is of full rank. Then, the matrix $AE_sC^t=0$ implies that $C=0$, since $AE_s$ has full rank. Hence, $$G_X=\left\{\begin{pmatrix} 
 A_{2s\times 2s}&B_{2s\times (n-2s)}\\
 0&D_{(n-2s)\times (n-2s)}\\
 \end{pmatrix}\middle|\begin{array}{c}A\in \mathrm {Sp}_{2s}(q),B\in\Mat_{2s\times (n-2s)}(q),D\in GL(n-2s,q)
    \end{array}\right\}.$$
So we have 
\begin{align}\label{s2e1}
|S| &= \frac{|GL(n,q)|}{|\mathrm {Sp}_{2s}(q)|\cdot|\Mat_{2s\times (n-2s)}(q)|\cdot|GL(n-2s,q)|} \nonumber \\
&= \frac{q^{s^2-s}\prod \limits_{i=0}^{2s-1} (q^{n-i}-1)} {\prod \limits_{i=1}^{s} (q^{2i}-1)}.\nonumber
\end{align}
    
\end{proof}
\begin{lem}\label{Con}
    For any skew-Hermitian matrix $C$ of size $n\times n$ and rank $r$, with entries in $\BF_{q^2}$. There exists an invertible matrix $A$ such that
    \begin{equation}
        AC\overline{A^t}=\alpha\begin{pmatrix} 
 I_r&0\\
0&0\\
 \end{pmatrix}
    \end{equation}
where $\alpha\neq0$ satisfies $\alpha+\alpha^q=0$.
\end{lem}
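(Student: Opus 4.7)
The plan is to reduce the skew-Hermitian classification to the more familiar Hermitian classification by a single scaling. First, I would fix once and for all an element $\alpha \in \mathbb{F}_{q^2}^{\times}$ with $\alpha + \alpha^{q} = 0$; such $\alpha$ exists because the trace map $\mathrm{Tr}\colon \mathbb{F}_{q^2} \to \mathbb{F}_q$, $x \mapsto x + x^{q}$, is $\mathbb{F}_q$-linear and surjective between spaces of $\mathbb{F}_q$-dimensions $2$ and $1$, so its kernel is a nonzero $\mathbb{F}_q$-line. Equivalently, $\overline{\alpha} = -\alpha$.

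Next, set $H := \alpha^{-1} C$. A direct computation using $\overline{\alpha} = -\alpha$ together with $\overline{C^{t}} = -C$ yields
\begin{equation*}
\overline{H^{t}} \;=\; \overline{\alpha^{-1}}\cdot \overline{C^{t}} \;=\; (-\alpha)^{-1}\cdot(-C) \;=\; \alpha^{-1} C \;=\; H,
\end{equation*}
so $H$ is Hermitian of the same rank $r$ as $C$. It therefore suffices to prove that any Hermitian $H \in \mathrm{Mat}_{n}(\mathbb{F}_{q^2})$ of rank $r$ is congruent in the Hermitian sense, $H \mapsto A H \overline{A^{t}}$, to $\mathrm{diag}(I_r,\,0)$; rescaling the resulting identity by $\alpha$ gives the claim.

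For the Hermitian classification I would argue by induction on $r$. The key input is that a nonzero Hermitian $H$ admits a vector $v$ with $\overline{v}^{t} H v \neq 0$: this scalar always lies in $\mathbb{F}_q$ (being equal to its own conjugate by Hermitian-ness), and if it vanished for every $v$ then the polarization identity for $u + \lambda v$ would give $\overline{\lambda}(\overline{v}^{t}Hu) + \lambda\,\overline{(\overline{v}^{t}Hu)} = 0$ for all $\lambda \in \mathbb{F}_{q^2}$, which by nondegeneracy of $\mathrm{Tr}$ forces $\overline{v}^{t} H u = 0$ for all $u, v$, i.e. $H = 0$. Because the norm map $N\colon \mathbb{F}_{q^2}^{\times} \to \mathbb{F}_q^{\times}$, $x \mapsto x\overline{x}$, is surjective, I may rescale $v$ so that $\overline{v}^{t} H v = 1$. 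Completing $v$ to a basis, passing to the orthogonal complement for the form defined by $H$ (which inherits a Hermitian form of rank $r - 1$), and iterating produces the required invertible $A$ with $AH\overline{A^{t}} = \mathrm{diag}(I_r,\,0)$.

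I do not anticipate any genuine obstacle; the only mildly delicate point is exhibiting a non-isotropic vector for $H$, which in odd characteristic is the routine polarization argument above. The remainder is Gram--Schmidt-style linear algebra over the quadratic extension $\mathbb{F}_{q^2}/\mathbb{F}_q$, together with the surjectivity of the norm map to absorb nonzero diagonal values into $1$.
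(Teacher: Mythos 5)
The paper states Lemma \ref{Con} without any proof, so there is no argument of the authors' to compare yours against; judged on its own terms, your proof is correct and complete. The reduction $H=\alpha^{-1}C$ to the Hermitian case is clean (the verification $\overline{H^{t}}=H$ uses exactly $\overline{\alpha}=-\alpha$ and $\overline{C^{t}}=-C$), the polarization argument producing a non-isotropic vector is valid since $q$ is odd and the trace form is nondegenerate, and surjectivity of the norm $N(x)=x^{q+1}$ onto $\BF_q^{\times}$ lets you normalize the diagonal entries to $1$; induction on the rank via the orthogonal complement of a non-isotropic vector then finishes the Hermitian classification, and multiplying back by $\alpha$ gives the statement. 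One point worth making explicit: your proof yields the \emph{stronger} assertion that $\alpha$ can be fixed in advance, independently of $C$, so all rank-$r$ skew-Hermitian matrices form a single congruence orbit --- indeed $\beta\,\mathrm{diag}(I_r,0)=t\alpha\,\mathrm{diag}(I_r,0)$ with $t\in\BF_q^{\times}$ is absorbed by the scalar matrix $sI$ with $N(s)=t$. This is directly relevant to how the lemma is applied: in the proof of Lemma \ref{U2n} the sets $U_\alpha$ are implicitly treated as partitioning $U(n,r)$ over the $q-1$ admissible values of $\alpha$, whereas by your argument every $U_\alpha$ equals all of $U(n,r)$, so the factor $(q-1)$ there deserves a second look (for $n=r=1$ the number of nonzero skew-Hermitian $1\times 1$ matrices is $q-1$, which is the single-orbit size, not $(q-1)^2$).
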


\begin{lem}\label{U2n}
   Let $U(n,r)$ denote the number of skew-Hermitian matrices of size $n\times n$ and rank $r$, with entries in $\BF_{q^2}$. Then,  
   \begin{equation}\label{Ur}
   U(n,r)=(q-1)q^{\frac{r(r-1)}{2}}\frac{\prod \limits_{i=n-r+1}^{n} (q^{2i}-1)} {\prod \limits_{s=1}^{r} (q^s-(-1)^s)}. 
\end{equation}
\end{lem}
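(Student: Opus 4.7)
The plan is to adapt the orbit-stabilizer argument used in Lemma \ref{so2n} to the Hermitian setting. Let $G := \GL(n, \BF_{q^2})$ act on the space of $n \times n$ skew-Hermitian matrices by congruence via $A \cdot C := A C A^*$, where $A^* = \overline{A^t}$. This action preserves both the skew-Hermitian condition (since $(ACA^*)^* = A C^* A^* = -ACA^*$) and the rank. By Lemma \ref{Con}, every rank-$r$ skew-Hermitian matrix lies in the $G$-orbit of some normal form $X_\alpha := \alpha \cdot \diag(I_r, 0_{n-r})$ with $\alpha$ in the set $T := \{\alpha \in \BF_{q^2}^* : \alpha + \alpha^q = 0\}$, which has cardinality $q - 1$.

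I would then fix $\alpha \in T$ and compute the stabilizer of $X_\alpha$. Writing $A = \begin{pmatrix} P & Q \\ R & S \end{pmatrix}$ in blocks of sizes $r + (n-r)$, the equation $A X_\alpha A^* = X_\alpha$ forces $P \overline{P}^t = I_r$ and $R = 0$, while $Q \in \Mat_{r \times (n-r)}(\BF_{q^2})$ is arbitrary and $S \in \GL(n - r, \BF_{q^2})$. Using the standard order formula for the unitary group $\mathrm U_r(q^2) := \{P \in \GL(r,\BF_{q^2}) : P \overline{P}^t = I_r\}$, this gives
\[
|G_{X_\alpha}| = q^{r(r-1)/2}\prod_{s=1}^r(q^s - (-1)^s) \cdot q^{2r(n-r)} \cdot q^{(n-r)(n-r-1)}\prod_{i=1}^{n-r}(q^{2i}-1).
\]

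If $N$ denotes the number of distinct $G$-orbits among the $X_\alpha$'s (all of which have equal size), then orbit-stabilizer yields $U(n, r) = N \cdot |G|/|G_{X_\alpha}|$. Substituting $|\GL(n, \BF_{q^2})| = q^{n(n-1)} \prod_{i=1}^n(q^{2i}-1)$ and cancelling, the $q$-power exponent telescopes via the identity
\[
n(n-1) - (n-r)(n-r-1) - 2r(n-r) - \tfrac{r(r-1)}{2} = \tfrac{r(r-1)}{2},
\]
while the product ratio $\prod_{i=1}^n (q^{2i}-1)/\prod_{i=1}^{n-r}(q^{2i}-1)$ collapses to $\prod_{i=n-r+1}^n (q^{2i} - 1)$, recovering the stated expression once one shows $N = q - 1$.

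The hard part will be pinning down $N$. It reduces to determining precisely when two normal forms $X_\alpha$ and $X_{\alpha'}$ lie in the same $G$-orbit; equivalently, whether the ratio $\alpha'/\alpha \in \BF_q^*$ is realized as a scalar output of the sesquilinear map $A \mapsto A \diag(I_r, 0) A^*$. This is a delicate interplay with the norm map $\mathrm{Nm}: \BF_{q^2}^* \to \BF_q^*$, whose surjectivity governs how scaling by $\alpha$ is absorbed into the action; verifying that exactly $q - 1$ orbits arise, each of the same size, is the technical heart of the argument and the main place where complications can enter.
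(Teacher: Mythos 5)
Your approach is the same as the paper's: orbit--stabilizer for the congruence action of $\GL(n,\BF_{q^2})$, with the same normal forms, the same stabilizer computation, and the same telescoping of $q$-powers. But the gap you flagged at the end --- determining the number $N$ of distinct orbits among the $X_\alpha$ --- is genuine, and it does not close in the direction you need. Your own observation about the norm map settles it the other way: the set $T=\{\alpha\in\BF_{q^2}^{*}:\alpha+\alpha^{q}=0\}$ is the nonzero part of the one-dimensional $\BF_q$-kernel of the trace, hence a single $\BF_q^{*}$-coset, and $\mathrm{Nm}:\BF_{q^2}^{*}\to\BF_q^{*}$ is surjective; so for any $\alpha,\alpha'\in T$ one may choose $\lambda$ with $\lambda^{1+q}=\alpha'/\alpha$ and then $(\lambda I_n)\,X_{\alpha}\,\overline{(\lambda I_n)^{t}}=X_{\alpha'}$. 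All the normal forms lie in one orbit, so $N=1$, and your computation actually yields the displayed expression divided by $q-1$.

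This is a defect of the statement rather than of your setup: the paper's own proof inserts the prefactor $q-1$ by tacitly treating the $q-1$ normal forms as pairwise non-congruent, which is false. The case $n=r=1$ is a clean check: the nonzero skew-Hermitian $1\times 1$ matrices are the $q-1$ nonzero solutions of $c^{q}=-c$, whereas the formula evaluates to $(q-1)\cdot\frac{q^{2}-1}{q+1}=(q-1)^{2}$. (Likewise for $n=2$, $r=1$ a direct count gives $(q-1)(q^{2}+1)$, again the formula without the prefactor.) So the honest completion of your proposal is to prove $N=1$ via norm surjectivity and drop the factor $q-1$; the step ``once one shows $N=q-1$'' cannot be carried out, and as long as that factor remains in the target identity no correct proof exists.
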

\begin{proof}
    Let $U_\alpha (\alpha\neq0)$ be the subset of $U(n,r)$ consisting of all elements of $U(n,r)$ that are congruent to $X=\alpha\begin{pmatrix} 
 I_r&0\\
0&0\\
 \end{pmatrix}$. Consider the action of $G=GL(n,{q^2})$ on $U_\alpha$ via  congruence which is transitive by Lemma \ref{Con}. Let $G_X=\{P\in G\mid P\cdot X=X\}$. Picking a matrix $P\in GL(n,{q^2})$ and decompose it as a block matrix $$\begin{pmatrix} 
 {A}_{r\times r}&B_{r\times (n-r)}\\
 C_{ (n-r)\times r}&D_{(n-r)\times (n-r)}\\
 \end{pmatrix}.$$
  $PX\overline{P^t}=X$ if and only if $$\alpha\begin{pmatrix} 
 A\overline{A^t}&A\overline{C^t}\\
C\overline{A^t}&C\overline{C^t}\\
 \end{pmatrix}=\alpha\begin{pmatrix} 
 I_r&0\\
0&0\\
 \end{pmatrix}.$$
The equation $A\overline{A^t}=I_r$ implies that $A$ is unitary and in particular is of full rank. Then, the matrix $A\overline{C^t}=0$ implies that $C=0$, since $A$ has full rank. Hence $$G_X=\left\{\begin{pmatrix} 
 A_{r\times r}&B_{r\times (n-r)}\\
 0&D_{(n-r)\times (n-r)}\\
 \end{pmatrix}\middle| \begin{array}{c}A\in \mathrm{U}_r(q^2),B\in\Mat_{r\times (n-r)}({q^2}),D\in GL(n-r,{q^2})\\ 
    \end{array}\right\}.$$\\
So we have  \\\begin{align}\label{s2e2}
|U(n,r)| &= (q-1)\frac{|GL(n,{q^2})|}{|\mathrm{U}_r(q^2)|\cdot|\Mat_{r\times (n-r)}(q^2)|\cdot|GL(n-r,{q^2})|} \nonumber \\
&= (q-1)q^{\frac{r(r-1)}{2}}\frac{\prod \limits_{i=n-r+1}^{n} (q^{2i}-1)} {\prod \limits_{s=1}^{r} (q^s-(-1)^s)} .\nonumber
\end{align}
    
\end{proof}

Denote the number of irreducible characters of ${R_u^{X,d}}$  with degree $q^e$ by $N_{(X,d),e}$.
\begin{thm}\label{character number of unipotent radical of maximal parabolic gruoup C}
For any $1\le d< n$, $0\le r\le d$. Let $e=(n-d)r$, we have \begin{equation}
N_{(\mathrm C,d),e}=\left\{
\begin{array}{rcl}
q^{2d(n-d)-2e}\cdot\prod \limits_{i=1}^s \frac{q^{2i}}{q^{2i}-1} \prod \limits_{i=0}^{2s-1} (q^{d-i}-1) & & {if\ e=2s(n-d)},\\
q^{2d(n-d)-2e}\cdot\prod \limits_{i=1}^s \frac{q^{2i}}{q^{2i}-1} \prod \limits_{i=0}^{2s} (q^{d-i}-1) & & {if\ e=(2s+1)(n-d)}.\\
\end{array} \right.
\end{equation}
Moreover, the set of integers that occur as degrees of irreducible characters of  $R_u^{\mathrm C,d}$ is precisely $\{q^e\mid 0\le r\le d,\ e=(n-d)r\}$. 
\end{thm}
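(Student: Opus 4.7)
The plan is to apply Theorem \ref{TYPE C} with $X = \mathrm C$, so the task reduces to computing $|\tilde{\mathfrak{P}}_{\mathrm C}(R_u, e)|$ and $|H^{\mathrm C}|$. From Table \ref{2}, the group $H^{\mathrm C}$ is parameterized by a single matrix $A \in \Mat_{d\times (n-d)}(\BF_q)$, so $|H^{\mathrm C}| = q^{d(n-d)}$.

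The key observation is that the coefficient matrix $\ScP_{\mathrm C}(B_1, B_3, B_2, 0)$ encodes a linear system in $A$ depending only on $B_1$. Indeed, the projection $[\,\cdot\,]_{B_{\mathrm C}}$ onto $B_{\mathrm C}^t$ annihilates everything in the bottom-right block of the matrix $P_{\mathrm C}(\,\cdot\,)$. Therefore the quadratic term $A^t B_1 A$ and the linear term $-B_2 A - A^t B_2$ in \eqref{e01} both disappear after projection, leaving only the blocks $-B_1 A$ and $-A^t B_1$; since $B_1 = B_1^t$, these two are transposes of each other. Thus the stabilizer condition collapses to the single linear equation $B_1 A = 0$, and $\rank(\ScP_{\mathrm C}(B_1, B_3, B_2, 0))$ equals the rank of the map $A \mapsto B_1 A$ on $\Mat_{d\times(n-d)}(\BF_q)$. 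Decomposing $A$ column-by-column, this operator acts independently on each column by left-multiplication by $B_1$, so its rank is exactly $(n-d)\cdot\rank(B_1)$.

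Consequently, the possible values of $e$ are exactly $e = r(n-d)$ with $0 \le r \le d$, which proves the ``moreover'' statement once we note that $\mathrm{diag}(1,\ldots,1,0,\ldots,0)$ with $r$ ones realizes any such rank. For a fixed $e = r(n-d)$, the preimage $\tilde{\mathfrak{P}}_{\mathrm C}(R_u, e)$ consists of triples $(B_1, B_3, B_2)$ in which $B_1$ is a symmetric $d\times d$ matrix of rank $r$, $B_2 \in \Mat_{(n-d)\times d}(\BF_q)$ is arbitrary, and $B_3 = B_2^t$ is forced. Hence, by Lemma \ref{sp2n},
\begin{equation*}
|\tilde{\mathfrak{P}}_{\mathrm C}(R_u, e)| = N(d, r)\cdot q^{d(n-d)}.
\end{equation*}

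Plugging into Theorem \ref{TYPE C} then gives
\begin{equation*}
N_{(\mathrm C,d),e} = q^{-2e} \cdot N(d,r) \cdot q^{d(n-d)} \cdot q^{d(n-d)} = q^{2d(n-d)-2e}\cdot N(d,r),
\end{equation*}
and substituting the two parity cases $r = 2s$ and $r = 2s+1$ from Lemma \ref{sp2n} produces the two formulas in the theorem. The only delicate step is verifying that the projection $[\,\cdot\,]_{B_{\mathrm C}}$ really annihilates both the quadratic and the bottom-right linear contributions, so that the stabilizer equations reduce cleanly to $B_1 A = 0$; once this is established, the remainder is bookkeeping combining Theorem \ref{TYPE C} with Lemma \ref{sp2n}.
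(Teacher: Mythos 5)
Your proposal is correct and follows essentially the same route as the paper: reduce to Theorem \ref{TYPE C}, observe that the projection $[\,\cdot\,]_{B_{\mathrm C}}$ kills the quadratic and $B_2$-dependent terms so the stabilizer system is $B_1A=0$ with block-diagonal coefficient matrix of rank $(n-d)\rank(B_1)$, and then count via $|\tilde{\mathfrak{P}}_{\mathrm C}(R_u,e)|=N(d,r)q^{d(n-d)}$ and Lemma \ref{sp2n}. You merely spell out the justification (column-by-column action of $B_1$, and an explicit rank-$r$ witness for the ``moreover'' claim) that the paper leaves as ``it is clear.''
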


\begin{proof}	 	
The coefficient matrix of (\ref{e01}) is $\ScP_\mathrm C(B_1,B_3,B_2,0)$  under a fixed total order of $\mathbf{a}_{i,j}$, where 
$$\ScP_\mathrm C(B_1,B_3,B_2,0)=\begin{pmatrix}
B_1 & &\\
&	\ddots   &    \\
&	  &    B_1   \\
	 \end{pmatrix}$$ is a block diagonal matrix of degree $d(n-d)$. Thus, $|\tilde{\frP}_\mathrm C({R_u},e)|=N(d,r)q^{d(n-d)}$. 
By Proposition \ref{TYPE C} and Lemma \ref{sp2n}, we complete the proof.
\end{proof}
 The proof of the following theorem is similar to that of the above theorem.
\begin{thm}\label{character number of unipotent radical of maximal parabolic gruoup D}
For any $1\le d< {n-1}$, $0\le2s\le d$. Let $e=2(n-d)s$, we have \begin{equation}
   N_{(\mathrm D,d),e}=q^{2d(n-d)-2e+s^2-s}\cdot\frac{\prod \limits_{i=0}^{2s-1} (q^{d-i}-1)} {\prod \limits_{i=1}^{s} (q^{2i}-1)}.
\end{equation}
Moreover, the set of integers that occur as degrees of irreducible characters of $R_u^{\mathrm D,d}$ is precisely $\{q^e\mid 0\le 2s\le d,\ e=2(n-d)s\}$. 
\end{thm}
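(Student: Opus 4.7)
The plan is to transcribe the proof of Theorem \ref{character number of unipotent radical of maximal parabolic gruoup C} to the type $\mathrm D$ setting, substituting Lemma \ref{so2n} for Lemma \ref{sp2n}. Applying Theorem \ref{TYPE C} with $X=\mathrm D$ and $k=\BF_q$ reduces the task to computing $|\tilde{\mathfrak{P}}_{\mathrm D}(R_u,e)|$ and $|H^{\mathrm D}|$; from Table \ref{2} one reads off immediately that $|H^{\mathrm D}|=q^{d(n-d)}$, since $H^{\mathrm D}$ is parametrized by a single free matrix $A\in\Mat_{d\times(n-d)}(\BF_q)$.

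For the coefficient matrix $\ScP_{\mathrm D}(B_1,B_3,B_2,0)$ arising from \eqref{e01}, the computation of $[V_{\mathrm C}(A)\cdot P_{\mathrm D}(B_1,B_3,B_2,0)\cdot V_{\mathrm C}(-A)]_{B_{\mathrm D}}$ is formally identical to the one already done in type $\mathrm C$: the vanishing of the linear part of $-B_1A-A^tB_1$ inside $B_{\mathrm D}^t$ produces the system $B_1A=0$, which, after vectorizing $A$ column-by-column, yields the block-diagonal form
$$\ScP_{\mathrm D}(B_1,B_3,B_2,0)=\diag(B_1,\ldots,B_1)$$
with $n-d$ copies of the $d\times d$ block $B_1$. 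The essential new input is that, in type $\mathrm D$, the relations $B_1^t=-B_1$ and $B_2=-B_3^t$ force $B_1$ to be skew-symmetric (and make the second linear equation $-A^tB_1=(B_1A)^t$ redundant with the first); since $q$ is odd, $\rank(B_1)$ is therefore an even integer $2s$ with $0\le 2s\le d$, and $\rank(\ScP_{\mathrm D})=(n-d)\cdot 2s$. In particular, only degrees of the form $q^e$ with $e=2(n-d)s$ occur, which already pins down the claimed degree set by Theorem \ref{TYPE C}.

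Setting $e=2(n-d)s$, the fibre $\tilde{\mathfrak{P}}_{\mathrm D}(R_u,e)$ consists of all data $(B_1,B_3,B_2,0)$ with $B_1$ a skew-symmetric $d\times d$ matrix of rank $2s$ and $B_3\in\Mat_{(n-d)\times d}(\BF_q)$ arbitrary (the block $B_2=-B_3^t$ being then determined). Hence
$$|\tilde{\mathfrak{P}}_{\mathrm D}(R_u,e)|=S(d,2s)\cdot q^{d(n-d)},$$
and substituting the closed formula of Lemma \ref{so2n} into the character-number formula of Theorem \ref{TYPE C} gives
$$N_{(\mathrm D,d),e}=q^{-2e}\cdot S(d,2s)\cdot q^{d(n-d)}\cdot q^{d(n-d)}=q^{2d(n-d)-2e+s^2-s}\cdot\frac{\prod_{i=0}^{2s-1}(q^{d-i}-1)}{\prod_{i=1}^{s}(q^{2i}-1)},$$
as desired.

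The only conceptual step beyond a direct transcription of the type $\mathrm C$ argument is the parity observation on $\rank(B_1)$; once that restricts admissible ranks to even values $2s$, the identification of the degree set $\{q^e\mid 0\le 2s\le d,\ e=2(n-d)s\}$ is automatic. I do not foresee any genuine obstacle: the main point to double-check is that the block-diagonal structure of $\ScP_{\mathrm D}$ is unaffected by the sign change in the symmetry of $B_1$, which is clear because the structure arises purely from the vectorization of $A$ and is insensitive to the (skew-)symmetry of $B_1$.
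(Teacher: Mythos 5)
Your proposal is correct and follows essentially the same route as the paper: the paper's own proof of this theorem is simply an appeal to Theorem \ref{TYPE C} and Lemma \ref{so2n}, relying on the block-diagonal form of the coefficient matrix established in the type $\mathrm C$ argument, which is exactly what you transcribe (with the correct observation that skew-symmetry forces $\rank(B_1)=2s$ even and hence $e=2(n-d)s$). Your count $|\tilde{\mathfrak{P}}_{\mathrm D}(R_u,e)|=S(d,2s)\cdot q^{d(n-d)}$ and the final substitution reproduce the stated formula.
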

\begin{proof}	 	
By Proposition \ref{TYPE C} and Lemma \ref{so2n}.
\end{proof}
\begin{rmk}
  The method is not applicable for $\mathrm {SO}_{2n+1}(q)$ due to our inability to factor $R_u^{\mathrm B,d}$ into a semi-direct product of two abelian groups.
\end{rmk}

\begin{thm}\label{character number of unipotent radical of maximal parabolic group U}
For any $0\le d\le {n-1}$, $0\le r\le {d}$. Let $e=(n-d)r$, we have \begin{equation}
    N_{(\mathrm U,d),e}=q^{4d(n-d)-4e}(q-1)q^{\frac{r(r-1)}{2}}\cdot\frac{\prod \limits_{i=d-r+1}^{d} (q^{2i}-1)} {\prod \limits_{s=1}^{r} (q^s-(-1)^s)} .
\end{equation}
Moreover, the set of integers that occur as degrees of irreducible characters of  $R_u^{\mathrm U,d}$ is precisely $\{q^e\mid 0\le r\le {d},\ e=(n-d)r\}$. 
\end{thm}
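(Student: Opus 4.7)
The strategy follows the pattern of the proofs of Theorems~\ref{character number of unipotent radical of maximal parabolic gruoup C} and~\ref{character number of unipotent radical of maximal parabolic gruoup D}: analyse the block structure of the coefficient matrix $\ScP_{\mathrm U}$, count its fibres using Lemma~\ref{U2n} (which plays the role that Lemmas~\ref{sp2n} and~\ref{so2n} played for types $\mathrm C$ and $\mathrm D$), and feed the result into the character number formula of Theorem~\ref{TYPE C} with $|k|=q^2$.

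First I would unpack the linear system~(\ref{e02}), namely $[P_{\mathrm U}(A_2B_2,-B_2A_1,0,0)]_{B_{\mathrm U}}=0$. Because $A_1$ and $A_2$ are tied by $A_1=-J_d\overline{A_2^t}J_{n-d}$, only the entries of $A_1\in\Mat_{d\times(n-d)}(\BF_{q^2})$ are independent variables; the two sub-systems coming from $A_2B_2$ and from $-B_2A_1$ are interchanged by Frobenius together with $J$-conjugation, and hence impose the same rank constraint. I expect the coefficient matrix $\ScP_{\mathrm U}(B_1,B_3,B_2,0)$ to decompose as $n-d$ identical $d\times d$ blocks whose common block is the skew-Hermitian matrix $B_2J_d$ (possibly up to relabelling by $J_d$). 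Consequently, when $\rank(B_2J_d)=r$, the rank of $\ScP_{\mathrm U}$ measured in the sense relevant to Theorem~\ref{TYPE C} with $|k|=q^2$ equals $(n-d)r$, matching the relation $e=(n-d)r$ in the statement.

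Next I would enumerate $|\widetilde{\mathfrak P}_{\mathrm U}(R_u,e)|$: the rank-$r$ skew-Hermitian $B_2J_d$ contributes a factor of $U(d,r)$ from Lemma~\ref{U2n}, while the block $B_3\in\Mat_{(n-d)\times d}(\BF_{q^2})$ is free (with $B_1$ determined by the Hermitian relation $B_1=-J_{n-d}\overline{B_3^t}J_d$), contributing an explicit $q$-power. Substituting this into Theorem~\ref{TYPE C} together with $|H^{\mathrm U}|=q^{2d(n-d)}$ and $|k|=q^2$, then collecting $q$-exponents against the closed form of Lemma~\ref{U2n}, yields the stated formula for $N_{(\mathrm U,d),e}$. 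The description of the degree set is then immediate from the surjectivity of $\pi_c$ and the range $r\in\{0,1,\dots,d\}$.

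The main obstacle I anticipate is the $q$-versus-$q^2$ book-keeping, which is more delicate than in types $\mathrm C$ and $\mathrm D$: the Lie algebra and the unitary constraints live naturally over $\BF_{q^2}$, but the additive pairing of Lemma~\ref{tru} takes values in $\BF_q$, and Theorem~\ref{TYPE C} is phrased in terms of $|k|^e$. One therefore has to establish precisely that the rank statistic appearing in the character formula matches $(n-d)r$ in the scaling required so that the degree equals $q^e$ rather than $(q^2)^e$. Once this identification is secured, the remainder of the proof reduces to a routine substitution of Lemma~\ref{U2n} into Theorem~\ref{TYPE C}.
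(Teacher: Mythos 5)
Your proposal matches the paper's proof: the paper likewise observes that the coefficient matrix $\ScP_{\mathrm U}(B_1,B_3,B_2,0)$ of \eqref{e02} is block diagonal with $n-d$ copies of $B_2$ (so its rank is $(n-d)\,\rank(B_2)$), concludes $|\tilde{\frP}_{\mathrm U}(R_u,e)|=U(d,r)\,q^{2d(n-d)}$ with the $q^{2d(n-d)}$ coming from the free block $B_3$, and substitutes into Theorem \ref{TYPE C} together with Lemma \ref{U2n}. Your extra remarks on the Frobenius/$J$-conjugation symmetry of the two sub-systems and on the $q$-versus-$q^{2}$ bookkeeping are elaborations of steps the paper leaves implicit, not a different argument.
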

\begin{proof}	 
The coefficient matrix of (\ref{e02}) is 
$$\ScP_\mathrm U(B_1,B_3,B_2,0)=\begin{pmatrix}
B_2 & &\\
&	\ddots   &    \\
&	  &    B_2   \\
	 \end{pmatrix},$$
    which is a block diagonal matrix of degree $d(n-d)$.
Thus, $|\tilde{\frP}_\mathrm U({R_u},e)|=U(d,r)q^{2d(n-d)}$. 
By Proposition \ref{TYPE C} and Lemma \ref{U2n}, we complete the proof.
\end{proof}

\section{GEOMETRIC CORRESPONDENCE}\label{GC}
In this section, we establish a geometric correspondence for the matrix mentioned above to derive a stronger result.

We provide some preliminary content about Schubert varieties. For more details, see \cite{Bor12, Hum12, Lak08 }.
\subsection{Schubert varieties in $G/Q$}
 Let $G$ be a semisimple and simply connected algebraic group defined over $k$. We use the notation in Section \ref{section 2.2.1}. Denote by $W_Q$ the Weyl group of $Q$. In each coset $wW_Q\in W/W_Q$, there exists a unique element of minimal length. Let $W^Q$ be the set of minimal representatives of $W/W_Q$. For $w\in W$, we denote the coset $wQ$ in $G/Q$ by $e_{w,Q}$. We have Bruhat decomposition $$
G/Q = \bigsqcup_{w\in W^Q} Be_{w,Q}.
$$
 Each $Be_{w,Q}$ is called a Bruhat cell, which is isomorphic to an affine space. Let $X_Q(w)$ be the Zariski closure of $Be_{w,Q}$ in $G/Q$. Then $X_Q(w)$ with the canonical reduced structure is called the Schubert variety. It is well-known that the Schubert variety $X_Q(w)$ can be written as $X_Q(w) = \bigsqcup_{u\in W^Q,u\leq w} Be_{u,Q}$, where $\leq$ is the Bruhat order. Let $B^-$ be the Borel subgroup of $G$ opposite to $B$. We have opposite decomposition
\[ G/Q = \bigsqcup_{w \in W^Q} B^- e_{w,Q}. \]
Here $B^- e_{w,Q}$ is called an opposite Bruhat cell, which is also an affine space. Consider the canonical projection $\pi: G/B \to G/Q$, we have the following properties: \begin{itemize}
    \item [(1)] $\pi^{-1}(B^- e_{w,Q}) = \bigsqcup_{u \in wW_Q} B^- e_{u,B}$.
    \item [(2)]  For $w \in W^Q$, the restriction of $\pi$ to $Be_{w,B}$ is an isomorphism onto $Be_{w,Q}$.
\end{itemize}

For $u, v \in W$, denote $X_u^v := Be_{u,B} \cap B^- e_{v,B}$ to be the intersection of $Be_{u,B}$ and $B^- e_{v,B}$; for $u, v \in W^Q$, define $Y_u^v := Be_{u,Q} \cap B^- e_{v,Q}$ to be the intersection of $Be_{u,Q}$ and $B^- e_{v,Q}$. $X_u^v$ and $Y_u^v$ are nonempty if and only if $v \leq u$. All varieties above can be defined over arbitrary field. Fix a base field $\mathbb{F}_q$. Consider the varieties $Y_u^v$ and $X_u^v$ that is defined over $\mathbb{F}_q$. We have :
 \begin{lem}\label{BQ}
 $|Y_u^v(\mathbb{F}_q)| = \sum_{w \in vW_Q, w \leq u} |X_u^w(\mathbb{F}_q)|$.
 \begin{proof}
     Noting that the isomorphism $\pi|_{Be_{u,B}}: Be_{u,B} \to Be_{u,Q}$ is defined over $\mathbb{F}_q$, we have
\[ Y_u^v \simeq (\pi|_{Be_{u,B}})^{-1}(Y_u^v) = Be_{u,B} \cap (\pi^{-1}(B^-e_{v,Q})) = \bigsqcup_{w \in vW_Q, w \leq u} X_u^w. \]
Hence
\[ |Y_u^v(\mathbb{F}_q)| = \sum_{w \in vW_Q, w \leq u} |X_u^w(\mathbb{F}_q)|. \]
 \end{proof}
 \end{lem}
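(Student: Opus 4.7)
The plan is to transfer the intersection computation from $G/Q$ up to $G/B$ via the projection $\pi$, using the two stated properties of $\pi$ as the main tools. Specifically, I would first invoke property (2): since $u\in W^Q$, the restriction $\pi|_{Be_{u,B}}\colon Be_{u,B}\to Be_{u,Q}$ is an isomorphism defined over $\mathbb{F}_q$, so pulling back $Y_u^v$ along this isomorphism yields a variety defined over $\mathbb{F}_q$ with the same number of $\mathbb{F}_q$-points as $Y_u^v$.

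Next I would identify the pullback explicitly. Since $Y_u^v=Be_{u,Q}\cap B^-e_{v,Q}$ and $\pi|_{Be_{u,B}}$ is bijective onto $Be_{u,Q}$, its preimage is
\[
(\pi|_{Be_{u,B}})^{-1}(Y_u^v)=Be_{u,B}\cap \pi^{-1}(B^-e_{v,Q}).
\]
Applying property (1) to $\pi^{-1}(B^-e_{v,Q})=\bigsqcup_{w\in vW_Q}B^-e_{w,B}$ and intersecting with $Be_{u,B}$ decomposes this preimage into $\bigsqcup_{w\in vW_Q}(Be_{u,B}\cap B^-e_{w,B})=\bigsqcup_{w\in vW_Q}X_u^w$. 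This decomposition is defined over $\mathbb{F}_q$, so counting $\mathbb{F}_q$-points and noting that $X_u^w(\mathbb{F}_q)$ is nonempty only when $w\le u$ gives the claimed sum.

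There is essentially no obstacle here beyond bookkeeping: the content of the lemma is really a compatibility between the Bruhat and opposite-Bruhat decompositions on $G/B$ and $G/Q$, and both ingredients are supplied by properties (1) and (2) stated just before the lemma. The only mild subtlety to keep in mind is ensuring that the isomorphism $\pi|_{Be_{u,B}}$ and the disjoint union from property (1) are both defined over $\mathbb{F}_q$ so that passing to $\mathbb{F}_q$-points preserves the decomposition; this follows because all Bruhat and opposite Bruhat cells, together with the projection $\pi$, are defined over the prime field.
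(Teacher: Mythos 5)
Your proposal is correct and follows exactly the same route as the paper's proof: pull back $Y_u^v$ along the isomorphism $\pi|_{Be_{u,B}}$ from property (2), decompose the preimage of $B^-e_{v,Q}$ using property (1), intersect to get $\bigsqcup_{w\in vW_Q,\, w\le u} X_u^w$, and count $\mathbb{F}_q$-points. Your added remark that the nonemptiness condition $w\le u$ and the rationality of the cells over the prime field need to be checked is a slightly more careful articulation of the same argument.
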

The $B$-orbit $Be_{w_0, Q}$ in $G/Q$ ($w_0$ being the unique element of maximal length in $W$) is called the big cell in $G/Q$. It is a dense open subset of $G/Q$, and it gets identified with $R_u(Q)$, the unipotent radical of $Q$. Let $B^-$ be the Borel subgroup of $G$ opposite to $B$. The $B^-$-orbit $B^-e_{id, Q}$ is called the opposite big cell in $G/Q$. This is again a dense open subset of $G/Q$, and it gets identified with the unipotent subgroup of $B^-$ associated with $S_Q$. Observe that both the big cell and the opposite big cell can be identified with affine space.

For a Schubert variety $X_Q(w) \subset G/Q$, $B^-e_{{id},Q} \cap X_Q(w)$ is called the opposite cell in $X_Q(w)$ (by abuse of language). In general, it is not a cell. It is a non-empty open affine subvariety of $X_Q(w)$ and a closed subvariety of the affine space $B^- e_{\text{id}}$. Note that the big cell $Be_{w_0,Q}$ being a $B$-orbit has empty intersection with any proper Schubert variety of $G/Q$. We
denote $B^- e_{\text{id},Q} \cap X_Q(w)$ by $Y_Q(w)$.

\subsubsection{The $\mathrm{Sp}_{2n}(q)$ case} 
  Let $G=\mathrm{Sp}_{2n}(q)$ and $Q=P^{\mathrm C,n}$. Denote by $R^-_u(Q)$ the unipotent subgroup of $B^-$ associated with $S_Q$. We already know that $R^-_u(Q)$ can be identified with the opposite big cell $B^-e_{id, Q}$ in $G/Q$. In this case, $$R^-_u(Q)=\left\{\begin{pmatrix}
 I_n&0\\ 
 A&I_n \end{pmatrix}\mid A=A^t \right\}.$$ So, we can identify the symmetric matrices with $B^-e_{id, Q}$. Furthermore, the rank conditions on the matrices are given by intersecting with certain Schubert varieties $X_Q(w)$ (\cite[ Theorem 6.2.5.2]{Lak08}). More precisely, let $Y_Q(w)$ ($Y_Q(u)$, respectively) be identified with all symmetric matrices of rank at most $r$ ($r-1$, respectively) for a certain $w$($u\leq w$, respectively). Then, $N(n,r)$ is equal to the number of $\BF_q$-rational points of the intersection of $B^-e_{id, Q}$ with $\bigsqcup_{u\textless y\leq w  ,y\in W^Q} Be_{y,Q}$. Deodhar defined a polynomial $R_{w,v}(x)$ in \cite{Deo85} which generalizes the $R$-polynomials of Kazhdan-Lusztig. They have the property that $R_{w,v}(q)=|B^-e_{v, B}\cap Be_{w,B}|$. Furthermore, $R_{w,v}(q)$ is actually a polynomial in $q-1$ with non-negative integral coefficients \cite[Theorem 1.3]{Deo85}. Thus, $N(n,r)$ are polynomials in $q-1$ with non-negative integral coefficients by Lemma \ref{BQ}.
\subsubsection{The $\mathrm{SO}_{2n}(q)$ case}
Let $G=\mathrm{SO}_{2n}(q)$ and $Q=P^{\mathrm D,n}$. We can identify the skew-symmetric matrices with $B^-e_{id, Q}$. This is analogous to the preceding exposition. The rank conditions on the matrices are given by intersecting with certain Schubert varieties $X_Q(w)$(\cite[ Theorem 7.2.5.2]{Lak08}). Thus, $S(n,2s)$ are also polynomials in $q-1$ with non-negative integral coefficients.
\subsubsection{The $\mathrm{U}_{2n}(q^2)$ case}
We use the notation in Section \ref{section 2.2.2}. Fix the integers $0\le r\le {n}$ and let $V=\mathbb F_{q^2}^n$. The Grassmannian $G_{r,n}$ is the set of $r$-dimensional subspaces $ U\subseteq V$.

Let $G={GL}(n,q^2)$, $S_Q=S\backslash\{\alpha_r\}$. We have $G/Q \cong G_{r,n}$; see \cite{Lak08}. We want to prove that $U(n,r)$ are polynomials in $q-1$ with non-negative integral coefficients. It is enough to prove it for $$\frac{\prod \limits_{i=n-r+1}^{n} (q^{2i}-1)} {\prod \limits_{s=1}^{r} (q^s-(-1)^s)}=\prod \limits_{s=1}^{r} (q^{s}+(-1)^s)\frac{\prod \limits_{i=n-r+1}^{n} (q^{2i}-1)} {\prod \limits_{s=1}^{r} (q^{2s}-1)}.$$ The latter factor is the number of points in $G_{r,n}$. Thus, the problem becomes the question of whether $|G/Q|$ is a polynomial in $q-1$ with non-negative integral coefficients. It is a well-known result. 


 In summary, we have the following theorem:
\begin{thm}
    The functions $N_{(X,d),e}$ are polynomials in $q-1$ with non-negative integral coefficients.
\end{thm}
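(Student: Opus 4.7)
The plan is to reduce the claim to showing that the three matrix counting functions $N(d,r)$, $S(d,2s)$, and $U(d,r)$ from Lemmas 4.2, 4.4, 4.6 are each polynomials in $q-1$ with non-negative integral coefficients. By the explicit formulas of Theorems 4.3, 4.5, 4.7, each $N_{(X,d),e}$ equals such a matrix count multiplied by a power of $q$ (and, in the unitary case, by an extra factor $(q-1)q^{r(r-1)/2}\prod_{s=1}^{r}(q^s+(-1)^s)$). Since $q^a=(1+(q-1))^a$ expands as a polynomial in $q-1$ with non-negative integral coefficients by the binomial theorem, and since each $q^s\pm 1$ has the same property, preservation of non-negative integrality is automatic provided the matrix counts themselves enjoy it. So the real content is: three finite-field cardinalities of rank strata have non-negative integer coefficients when written in $q-1$.

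For the symplectic case, I would identify the space of symmetric $d\times d$ matrices over $\mathbb{F}_q$ with the opposite big cell $B^-e_{id,Q}$ of $G/Q$, where $G=\mathrm{Sp}_{2d}$ and $Q$ is the maximal parabolic stabilizing a Lagrangian. Via Theorem 6.2.5.2 of \cite{Lak08}, the closed subvariety of symmetric matrices of rank at most $r$ corresponds to an explicit Schubert variety $X_Q(w_r)\cap B^-e_{id,Q}=Y_Q(w_r)$, so the rank exactly $r$ stratum is the disjoint union $\bigsqcup_{w_{r-1}<y\le w_r,\ y\in W^Q} Y_y^{id}$. Applying Lemma 5.1 decomposes each $|Y_y^{id}(\mathbb{F}_q)|$ as $\sum_{v\in W_Q,\ v\le y}|X_y^v(\mathbb{F}_q)|$, and each $|X_y^v(\mathbb{F}_q)|$ is Deodhar's $R$-polynomial $R_{y,v}(q)$, which by \cite[Theorem 1.3]{Deo85} is a polynomial in $q-1$ with non-negative integral coefficients. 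Summing over the finitely many pairs $(y,v)$ preserves the property, yielding the desired conclusion for $N(d,r)$.

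The skew-symmetric case proceeds identically in structure, taking $G=\mathrm{SO}_{2d}$, the maximal parabolic stabilizing a maximal isotropic subspace, and substituting Theorem 7.2.5.2 of \cite{Lak08} for Theorem 6.2.5.2; the same Deodhar positivity argument then handles $S(d,2s)$. For the unitary case, I would rewrite
\[
\frac{\prod_{i=d-r+1}^{d}(q^{2i}-1)}{\prod_{s=1}^{r}(q^s-(-1)^s)} \;=\; \Bigl(\prod_{s=1}^{r}(q^s+(-1)^s)\Bigr)\cdot\frac{\prod_{i=d-r+1}^{d}(q^{2i}-1)}{\prod_{s=1}^{r}(q^{2s}-1)},
\]
so that the last factor becomes the Gaussian binomial $|G_{r,d}(\mathbb{F}_{q^2})|$. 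The Grassmannian decomposes into Bruhat cells of cardinalities $q^{2\ell}$, each of which is a polynomial in $q-1$ with non-negative integral coefficients; multiplying by the remaining factor $(q-1)q^{r(r-1)/2}\prod_{s}(q^s+(-1)^s)$ preserves the property by the elementary observations above.

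The main obstacle is conceptual rather than computational: it is the identification of the rank strata of the classical matrix spaces with unions of Bruhat-cell intersections inside the appropriate classical-type partial flag variety, which must be extracted from \cite{Lak08}. Once this geometric translation is in place, the Deodhar positivity theorem does all the heavy lifting, and the prefactors in the character formulas pose no additional difficulty.
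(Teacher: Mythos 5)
Your proposal is correct and follows essentially the same route as the paper: reduce to the positivity of the rank-stratum counts $N(d,r)$, $S(d,2s)$, $U(d,r)$, identify the first two with unions of intersections $Y^{id}_y$ via the opposite big cells of $\mathrm{Sp}_{2d}/Q$ and $\mathrm{SO}_{2d}/Q$ and Lakshmibai's rank theorems, and invoke Lemma \ref{BQ} together with Deodhar's positivity for the $R$-polynomials, handling the unitary case by factoring out the Grassmannian point count. The only (harmless) difference is that you spell out the Bruhat-cell argument for $|G_{r,d}(\mathbb{F}_{q^2})|$ and the elementary positivity of the prefactors, which the paper leaves as "well-known."
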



\begin{thebibliography}{DD}\frenchspacing
\bibitem[1]{Bor12} A. Borel, {\em Linear algebraic groups}, Springer Science \& Business Media, 2012.

\bibitem[2]{Deo85} V. Deodhar, {\em On some geometric aspects of Bruhat orderings. I. A finer decomposition of Bruhat cells}, Inventiones mathematicae {\bf 79} (1985), 499–511.









\bibitem[3]{Hum12} J. Humphreys, {\em Linear algebraic groups}, Springer Science \& Business Media, 2012.



\bibitem[4]{Isa95} IM. Isaacs, {\em Characters of groups associated with finite algebras}, J. Algebra {\bf 177} (1995), 708--730.

\bibitem[5]{Isa07} IM. Isaacs, {\em Counting characters of upper triangular groups}, J. Algebra {\bf 315} (2007), 698--719.

\bibitem[6]{Kir99} A. Kirillov, {\em Merits and demerits of the orbit method}, Bull. Amer. Math. Soc. {\bf 36} (1999), 433--488.

\bibitem[7]{Lak08} V. Lakshmibai, {\em Standard monomial theory: invariant theoretic approach}, Springer, 2008.

\bibitem[8]{Leh74} G. Lehrer, {\em Discrete series and the unipotent subgroup}, Compositio Mathematica {\bf 28} (1974), 9--19.





\bibitem[9]{Mac69} J. MacWilliams, {\em Orthogonal matrices over finite fields}, The American Mathematical Monthly, {\bf 76} (1969), 152–164.

\bibitem[10]{Mal11} G. Malle, {\em Linear Algebraic Groups and Finite Groups of Lie Type}, Cambridge Studies in Advanced Mathematics, 2011.

\bibitem[11]{Ni22r} C. Nien, {\em Characters of unipotent radicals of standard parabolic subgroups with 3 parts}, J. Algebr. Comb. {\bf 55} (2022), 325--333.
\bibitem[12]{Sa09} J. Sangroniz, {\em Irreducible characters of large degree of Sylow p-subgroups of classical groups}, J. Algebra {\bf 321} (2009), 1480-1496.

\bibitem[13]{Se77} J. Serre, {\em Linear representations of finite groups}, Springer, 1977.

\end{thebibliography}
\end{document}